\newtheorem{theorem}{Theorem}[section]
\newtheorem{lemma}[theorem]{Lemma}
\newtheorem{corollary}[theorem]{Corollary}
\newtheorem{proposition}[theorem]{Proposition}
\newtheorem{claim}[theorem]{Claim}
\newtheorem{fact}[theorem]{Fact}
\theoremstyle{definition}
\newtheorem{definition}[theorem]{Definition}
\newtheorem{example}[theorem]{Example}
\newtheorem{remark}[theorem]{Remark}
\newtheorem{question}[theorem]{Question}
\def\acl{\operatorname{acl}}
\def\tp{\operatorname{tp}}
\def\Aut{\operatorname{Aut}}
\def\M{\mathbb M}
\def\cL{\mathcal L}
\def\cK{\mathcal K}
\def\Q{\mathbb Q}
\def\cyc{\operatorname{cyc}}
\newcommand{\seq}{\subseteq}
\newcommand{\eq}{\mathrm{eq}}
\newcommand{\Fraisse}{Fra\"{i}ss\'{e}}
\newcommand{\mand}{\makebox[.5in]{and}}
\newcommand{\mimp}{\makebox[.5in]{$\Rightarrow$}}
\newcommand{\miff}{\makebox[.5in]{$\Leftrightarrow$}}
\newcommand{\bbar}{\bar{b}}
\newcommand{\SOP}{\mathrm{SOP}}
\newcommand{\NSOP}{\mathrm{NSOP}}
\newcommand{\NTP}{\mathrm{NTP}}
\newcommand{\Th}{\mathrm{Th}}
\newcommand{\TOG}{T_{\mathsf{OG}}}
\def\Ind{\setbox0=\hbox{$x$}\kern\wd0\hbox to 0pt{\hss$\mid$\hss}
\lower.9\ht0\hbox to 0pt{\hss$\smile$\hss}\kern\wd0}
\def\Notind{\setbox0=\hbox{$x$}\kern\wd0\hbox to 0pt{\mathchardef
\nn=12854\hss$\nn$\kern1.4\wd0\hss}\hbox to
0pt{\hss$\mid$\hss}\lower.9\ht0 \hbox to 0pt{\hss$\smile$\hss}\kern\wd0}
\def\ind{\mathop{\mathpalette\Ind{}}}
\def\nind{\mathop{\mathpalette\Notind{}}}
\renewcommand\emptyset{\varnothing}
\newcommand{\ter}[1]{\ind^{\!\!{#1}}}
\newcommand{\nter}[1]{\nind^{\!\!{#1}}}
\newcommand{\trt}[1]{\ind^{\!\!\textnormal{#1}}}
\newcommand{\ntrt}[1]{\nind^{\!\!\textnormal{#1}}}
   \def\MR#1{}
\title{Three surprising instances of dividing}
\date{April 4, 2024}
\author[G. Conant]{Gabriel Conant}
\address{Department of Mathematics\\
The Ohio State University\\
Columbus, OH 43210\\
 USA}
\email{conant.38@osu.edu}
\author[A. Kruckman]{Alex Kruckman}
\address{Department of Mathematics and Computer Science\\
Wesleyan University\\
Middletown, CT 06459\\
 USA}
\email{akruckman@wesleyan.edu}
\begin{document}

\begin{abstract}
We give three counterexamples to the folklore claim  that in an arbitrary theory, if a complete type $p$ over a set $B$ does not divide over $C\seq B$, then no extension of $p$ to a complete type over $\acl(B)$  divides over $C$.
Two of our examples are also the first known theories where all sets are extension bases for nonforking, but forking and dividing differ for complete types (answering a question of Adler). One example is an $\NSOP_1$ theory with a complete type that forks, but does not divide, over a model (answering a question of d'Elb\'{e}e). Moreover, dividing independence  fails to imply M-independence in this example (which refutes another folklore claim). In addition to these counterexamples, we summarize various related properties of dividing that are still true. We also address consequences for previous literature, including  an earlier unpublished result about forking and dividing in free amalgamation theories, and some claims about dividing in the theory of generic $K_{m,n}$-free incidence structures.
\end{abstract} 

\maketitle

\section{Introduction}

The basis for this paper is the discovery that a certain well-known property of dividing independence, which was originally stated in \cite{Adgeo} and appears as folklore in a number of places, is actually false.

To elaborate, let $T$ be a complete first-order theory with monster model $\M$. We let $\trt{d}$ denote dividing independence for (small) sets in $\M$ (see Definition \ref{def:relations}).  Given $C\subset\M$, we say that $\trt{d}$ has \textbf{algebraic extension over $C$} if, for any $A,B\subset\M$, $A\trt{d}_C B$ implies $A\trt{d}_C \acl(BC)$. We say that $\trt{d}$ has \textbf{algebraic extension} if this holds over any $C\subset\M$. 

Remark 5.4(3) of \cite{Adgeo} states that $\trt{d}$ always has algebraic extension. This also appears as Exercise 1.24$(iii)$ in Chapter 1 of Adler's thesis \cite{Adthesis} (later published as  \cite{Adgeo}).  A solution to this exercise is given in \cite{Adthesis}; however, a gap in the argument was found by the first-named author and Terry in February 2013. At that time, Adler   suggested a new proof (via personal communication), which was rewritten in notes posted on the first-named author's website. Then, in February of 2021, the second-named author found that the same gap was still present in the new proof, but hidden in a more subtle way, and subsequently discovered one of  three counterexamples we will present here. Each of these examples demonstrates unique aspects of how and where algebraic extension for $\trt{d}$ can fail. To help provide context for how these examples have been curated, note that $\trt{d}$ does have algebraic extension over $C$ whenever forking and dividing over $C$ are the same for complete types (see Remark \ref{rem:ext}). So, for example, $\trt{d}$ has algebraic extension in any simple theory. More generally, if $T$ is $\NTP_2$ and $C$ is an extension base for nonforking, then $\trt{d}$ has algebraic extension over $C$ by \cite{ChKa}. With these facts in mind, we now give an overview of our examples.

\begin{enumerate}[$(1)$]
\item In Section \ref{sec:DCO}, we show that $\trt{d}$ fails algebraic extension over $\emptyset$ in $T=\Th(\Q,\cyc)^{\eq}$ where $\cyc$ is the circular order on $\Q$. Note that $T$ is NIP, and that $\emptyset$ is (necessarily) \emph{not} an extension base for nonforking in $T$.
\item In Section \ref{sec:GBF}, we show that $\trt{d}$ fails algebraic extension \emph{over a model} in the $\NSOP_1$ theory $T=(T^\emptyset_{f})^{\eq}$, where $T^\emptyset_{f}$ is the model completion of the empty theory in a language with only a binary function symbol $f$. In this case, the failure of algebraic extension for $\trt{d}$ actually arises through the  failure of $\trt{d}\Rightarrow \trt{M}$ (see Section \ref{sec:Mind} for further discussion of M-independence). Thus this example also refutes the claim made in Remark 5.4(4) of \cite{Adgeo}. We also show that in $T$, all sets are extension bases for nonforking.
\item In Section \ref{sec:OG}, we construct a theory, called $\TOG$, where $\trt{d}$ fails algebraic extension for the  stronger reason that $\trt{d}_C$ need not imply $\trt{d}_{\acl(C)}$ (see Remark \ref{rem:add-to-base} for further discussion). $\TOG$ is an $\NSOP_4$ theory (with $\SOP_3$) in which all sets are extension bases for nonforking. This is the original counterexample discovered by the second-named author. 
\end{enumerate}

In addition to refuting the erroneous claims made in \cite[Remark 5.4]{Adgeo}, these counterexamples also answer some other open questions. First, recall that if $\trt{f}=\trt{d}$  (i.e.,  forking and dividing coincide for complete types), then all sets are extension bases for nonforking. Question A.1 of Adler's thesis \cite{Adthesis} asks whether the converse holds. As noted above, if $\trt{d}$ fails algebraic extension, then $\trt{f}\neq \trt{d}$, and thus both $(2)$ and $(3)$ provide counterexamples to Adler's question. Moreover, $(2)$ and $(3)$ appear to be the first known examples of $\NSOP$ theories in which $\trt{f}\neq \trt{d}$. Whether $\trt{f}=\trt{d}$ in $\NSOP_1$ theories specifically had been discussed by a number of people in the field, and was asked by d'Elb\'{e}e in \cite[Question 2]{dElbACFG}. Example $(2)$  shows that $\trt{f}=\trt{d}$ need not hold in $\NSOP_1$ theories, even over models. In fact, since the discovery of this example, two others have surfaced in previous literature from erroneous arguments relying on \cite[Remark 5.4]{Adgeo} (see Sections \ref{sec:Tmn} and \ref{sec:ACFG}).

We now give an outline of the paper. In Section \ref{sec:prelim} we recall various definitions, and then we spend some time discussing positive results related to \cite[Remark 5.4]{Adgeo} that are still true. For example, algebraic extension can be viewed as one part of the more general question of preservation of algebraic closure for a ternary relation (Definition \ref{def:PAC}). We clarify in Proposition \ref{prop:PAC-AE-div} what amount of preservation one can obtain for $\trt{d}$, and we show that $\trt{f}$ always preserves algebraic closure. These facts are known in the folklore, but for obvious reasons we think it appropriate to provide details. Then in Section \ref{sec:Mind} we focus on the question of when $\trt{d}$ implies $\trt{M}$, which was the main motivation for \cite[Remark 5.4]{Adgeo}. Among other quick observations, we show in Proposition \ref{prop:preg} that $\trt{d}$ implies $\trt{M}$ in  pregeometric theories (where $\trt{M}$ is especially meaningful; see Fact \ref{fact:preg}). We then move on to our three counterexamples summarized above, which are given in Section \ref{sec:examples}. The rest of the paper is devoted to addressing uses of \cite[Remark 5.4]{Adgeo} in previous literature. First, in Section \ref{sec:FAT}, we focus on free amalgamation theories, defined by the first-named author in \cite{CoFA}. It turns out that the theory $\TOG$ in  example $(3)$ above is a free amalgamation theory with disintegrated algebraic closure (Corollary \ref{prop:OGFAT}). Consequently, the failure of algebraic extension in $\TOG$ refutes the claim made in an earlier unpublished research note (of the authors) that $\trt{f}=\trt{d}$ in any such theory. However, the arguments from that note can be adjusted to show that forking and dividing are the same for complete types over algebraically closed sets (see Theorem \ref{thm:FAT}). Finally, Section \ref{sec:corrections} contains brief discussion of  other places where \cite[Remark 5.4]{Adgeo} was used. For example, we amend some incorrect statements from \cite{CoKr} about dividing in the generic theory of $K_{m,n}$-free incidence structures.

\subsection*{Acknowledgements} The authors thank Christian d'Elb\'{e}e and Nick Ramsey for stimulating discussions, as well as the anonymous referee for their careful reading of the paper and helpful revisions. Conant was partially supported by NSF grant DMS-2204787.

\section{Dividing and Algebraic Closure}\label{sec:prelim}

Throughout this section, we work in the setting of a complete theory $T$ with monster model $\M$.  We allow letters $a,b,c,\ldots$ to denote tuples from $\M$ (which may be infinite), but sometimes also use vector notation $\overline{a},\overline{b},\overline{c},\ldots$ when the distinction between tuples and singletons is important.

\subsection{Preliminaries}

We first recall several axioms of a ternary relation $\ind$ on small subsets of $\M$.

\begin{definition}$~$
\begin{enumerate}[$(1)$]
\item \textit{(invariance)} For all $A,B,C$, if $A\ind_C B$ and $\sigma\in\Aut(\M)$ then $\sigma(A)\ind_{\sigma(C)}\sigma(B)$.
\item \textit{(monotonicity)} For all $A,B,C$, if $A\ind_C B$ then $A'\ind_C B'$ for all $A'\seq A$ and $B'\seq B$.
\item \textit{(base monotonicity)} For all $A,B,C$, if $A\ind_C B$ and $D\seq B$ then $A\ind_{CD} B$. 
\item \textit{(extension)} For all $A,B,C$, if $A\ind_C B$ and $D\supseteq B$, then there is some $A'\equiv_{BC} A$ such that $A'\ind_C D$. 
\item \textit{(existence)} $A\ind_C C$ for all $A$ and $C$.
\end{enumerate}
\end{definition}

Next we recall a few special independence relations. 

\begin{definition}\label{def:relations}
Fix $A,B,C\subset \M$, and let $a$ be a tuple enumerating $A$.
\begin{enumerate}[$(1)$]
    \item $A\trt{d}_C B$ if $\tp(a/BC)$ does not divide over $C$.\vspace{3pt}
    \item $A\trt{f}_C B$ if $\tp(a/BC)$ does not fork over $C$.\vspace{3pt}
    \item $A\trt{a}_C B$ if $\acl(AC)\cap \acl(BC)=\acl(C)$.\vspace{3pt}
    \item $A\trt{M}_C B$ if $\acl(AD)\cap \acl(BD)=\acl(D)$ for any $D$ such that $C\seq D\seq\acl(BC)$.
\end{enumerate}
\end{definition}

For complete definitions of forking and dividing, see \cite{Adgeo} or \cite[Chapter 7]{TZ}. Recall also that a set $C\subset\M$ is called an \emph{extension base for nonforking} if $A\trt{f}_C C$ for any $A\subset\M$. Thus $\trt{f}$ satisfies existence if and only if all sets are extension bases for nonforking.

\begin{remark}
    In Adler's thesis \cite{Adthesis}, ``existence" is used for a stronger axiom that is later renamed ``full existence" in \cite{Adgeo}. So to forestall potential confusion, we note that existence and full existence are equivalent for $\trt{f}$. (See also \cite[Fact 4.2]{CoHa})
\end{remark}

\subsection{Algebraic extension and preservation of algebraic closure}

The next definition repeats the  key notion from the introduction, but in the  the setting of a general  ternary relation $\ind$ on small subsets of $\M$. 

\begin{definition}
    A ternary relation $\ind$ has \textbf{algebraic extension over $C\subset\M$} if, for any $A,B\subset\M$, $A\ind_C B$ implies $A\ind_C \acl(BC)$.
    We say $\ind$ has \textbf{algebraic extension} if this holds over any $C\subset\M$. 
\end{definition}

We note that algebraic extension is nearly the same as the \emph{right closure} axiom defined by d'Elb\'{e}e  in \cite{dElbACFG,dElbee}. Indeed, the two are equivalent for ternary relations satisfying right monotonicity and \emph{right normality} ($A\ind_C B$ implies $A\ind_C BC$). These axioms are also related to the following stronger property.

\begin{definition}\label{def:PAC}
    A ternary relation $\ind$ \textbf{preserves algebraic closure} if, for any $A,B,C\subset\M$,
    \[
    \textstyle A\ind_C B ~\Leftrightarrow~ \acl(AC)\ind_C B~\Leftrightarrow~A\ind_C \acl(BC)~\Leftrightarrow~A\ind_{\acl(C)} B.
    \]
\end{definition}

\begin{lemma}\label{lem:PAC-AE-test}
    Let $\ind$ be an invariant ternary relation satisfying monotonicity and base monotonicity. Then $\ind$ preserves algebraic closure if and only if it has algebraic extension and satisfies the following axioms.
    \begin{enumerate}[$(i)$]
    \item For any $A,B,C\subset \M$, if $A\ind_C B$ then $\acl(AC)\ind_C B$. 
    \item For any $A,B,C\subset \M$, if $A\ind_{\acl(C)} B$ then $A\ind_C B$. 
    \end{enumerate} 
\end{lemma}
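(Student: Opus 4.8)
The plan is to prove the biconditional by unwinding the definition of ``preserves algebraic closure'' into its three constituent implications and matching each against algebraic extension plus axioms $(i)$ and $(ii)$. Recall that $\ind$ preserves algebraic closure means that for all $A,B,C$ the four statements $A\ind_C B$, $\acl(AC)\ind_C B$, $A\ind_C\acl(BC)$, and $A\ind_{\acl(C)}B$ are equivalent. Since $A\seq\acl(AC)$, $B\seq\acl(BC)$, and $C\seq\acl(C)$, monotonicity gives the implications $\acl(AC)\ind_C B\Rightarrow A\ind_C B$, $A\ind_C\acl(BC)\Rightarrow A\ind_C B$, and base monotonicity (together with invariance, to replace $\acl(C)$-over-$C$ correctly — actually just base monotonicity applied with $D=\acl(C)\seq$... here one must be slightly careful since base monotonicity adds parameters \emph{from the right side}) gives one direction of the fourth equivalence. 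So the nontrivial content is exactly the reverse implications: $A\ind_C B\Rightarrow\acl(AC)\ind_C B$ (which is axiom $(i)$), $A\ind_C B\Rightarrow A\ind_C\acl(BC)$ (which is algebraic extension), and $A\ind_{\acl(C)}B\Rightarrow A\ind_C B$ (which is axiom $(ii)$).

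First I would establish the ``free'' directions carefully. For $\acl(AC)\ind_C B\Rightarrow A\ind_C B$ and $A\ind_C\acl(BC)\Rightarrow A\ind_C B$, invoke monotonicity directly. For the base-parameter equivalence, the subtle point is that base monotonicity as stated only lets us add parameters $D\seq B$ to the base; to go from $A\ind_C B$ to $A\ind_{\acl(C)}B$ I would instead argue that $\acl(C)\seq\acl(BC)$... no — the cleaner route: if $A\ind_C\acl(BC)$ then by right monotonicity $A\ind_C \acl(C)B$... hmm, $\acl(C)\seq\acl(BC)$, so $\acl(C)B\seq\acl(BC)$, hence $A\ind_C\acl(C)B$ by monotonicity, and then base monotonicity with $D=\acl(C)\seq\acl(C)B$ gives $A\ind_{C\acl(C)}\acl(C)B$, i.e. $A\ind_{\acl(C)}\acl(C)B$, and monotonicity again gives $A\ind_{\acl(C)}B$. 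So I need algebraic extension available to run this, which is fine since it is one of the hypotheses on the ``preserves algebraic closure'' side and one of the conclusions on the other side — I will have to organize the two directions of the proof so this is not circular.

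Concretely, for the forward direction ($\ind$ preserves algebraic closure $\Rightarrow$ algebraic extension, $(i)$, $(ii)$): algebraic extension and $(i)$ are immediate instances of the chain of equivalences (taking $A\ind_C B$ and reading off $A\ind_C\acl(BC)$ and $\acl(AC)\ind_C B$ respectively); for $(ii)$, from $A\ind_{\acl(C)}B$ the chain gives $A\ind_C B$ directly. For the reverse direction (algebraic extension, $(i)$, $(ii)$ $\Rightarrow$ preserves algebraic closure): I show each of the four statements is equivalent to $A\ind_C B$. The equivalence with $\acl(AC)\ind_C B$ uses $(i)$ forward and monotonicity backward; the equivalence with $A\ind_C\acl(BC)$ uses algebraic extension forward and monotonicity backward; the equivalence with $A\ind_{\acl(C)}B$ uses $(ii)$ backward, and forward uses the base-monotonicity-plus-algebraic-extension computation sketched above (which is legitimate here since algebraic extension is assumed).

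I expect the main obstacle to be purely bookkeeping: making sure the base-change direction $A\ind_C B\Rightarrow A\ind_{\acl(C)}B$ is derived only from the axioms actually available (monotonicity, base monotonicity, algebraic extension) without accidentally assuming preservation of algebraic closure, and confirming that base monotonicity legitimately tolerates enlarging the base by $\acl(C)$ once $\acl(C)$ has been moved to the right-hand side via algebraic extension and monotonicity. No deep idea is needed — the statement is essentially a repackaging lemma — so the write-up will be short, with the one genuinely load-bearing step being that small derivation.
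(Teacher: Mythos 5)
Your proposal is correct and follows essentially the same route as the paper: the forward direction is read off directly from the chain of equivalences, and the reverse direction reduces to the two ``free'' implications supplied by monotonicity and base monotonicity, with your load-bearing step ($A\ind_C B \Rightarrow A\ind_C\acl(BC) \Rightarrow A\ind_{\acl(C)}B$ via algebraic extension, then base monotonicity with $D=\acl(C)$, then monotonicity) being exactly the paper's implicit use of its implication $(iv)$. The circularity worry you flag is correctly resolved, since algebraic extension is only invoked in the direction where it is a hypothesis.
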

\begin{proof}
We only need to prove the nontrivial direction. Assume $\ind$ has algebraic extension and satisfies $(i)$ and $(ii)$. Then in order to establish preservation of algebraic closure for $\ind$, it suffices to prove:
\begin{enumerate}
\item[$(iii)$] For any $A,B,C\subset \M$, if $\acl(AC)\ind_C B$ then $A\ind_C B$.
\item[$(iv)$] For any $A,B,C\subset \M$, if $A\ind_C \acl(BC)$ then $A\ind_{\acl(C)} B$.
\end{enumerate}
Both of these are immediate from monotonicity and base monotonicity for $\ind$. 
\end{proof}

The reason we have not given axioms $(i)$ and $(ii)$ their own names is because our focus is on $\trt{d}$ which, as we observe next, always satisfies these axioms.

\begin{proposition}\label{prop:PAC-AE-div}
    In any theory, $\trt{d}$ satisfies axioms $(i)$ and $(ii)$ of Lemma \ref{lem:PAC-AE-test}. Hence $\trt{d}$ preserves algebraic closure if and only if it has algebraic extension.
\end{proposition}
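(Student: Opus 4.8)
The ``hence'' clause is immediate from Lemma~\ref{lem:PAC-AE-test}: it is standard that $\trt{d}$ is invariant and satisfies monotonicity and base monotonicity, so once $(i)$ and $(ii)$ are verified, preservation of algebraic closure for $\trt{d}$ is equivalent to algebraic extension for $\trt{d}$. So the plan is to check $(i)$ and $(ii)$ directly from the definition of dividing, in each case reducing to a statement about a single formula and a finite tuple.

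For $(ii)$ I would first prove the local fact that, for any formula $\varphi(x;b)$ and set $C$, $\varphi(x;b)$ divides over $C$ if and only if $\varphi(x;b)$ divides over $\acl(C)$. The backward direction is trivial, since an $\acl(C)$-indiscernible sequence is $C$-indiscernible. For the forward direction: take a $C$-indiscernible sequence $(b_i)_{i<\omega}$ witnessing dividing (so $b_0=b$ and $\{\varphi(x;b_i)\}_{i<\omega}$ is $k$-inconsistent), extend it to a $C$-indiscernible sequence of length $\lambda$ for $\lambda$ sufficiently large relative to $|\acl(C)|+|T|$, and extract an $\acl(C)$-indiscernible subsequence by Erd\H{o}s--Rado; the subsequence remains $k$-inconsistent, so it witnesses that $\varphi(x;b')$ divides over $\acl(C)$, where $b'$ is its initial term. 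Since $b'\equiv_C b$, any $\sigma\in\Aut(\M/C)$ with $\sigma(b')=b$ fixes $\acl(C)$ setwise and hence carries this witness to one showing that $\varphi(x;b)$ divides over $\acl(C)$. Now $(ii)$ for $\trt{d}$ follows by contraposition: if $\tp(a/BC)$ divides over $C$, then some $\varphi(x;b)\in\tp(a/BC)$ divides over $C$, hence over $\acl(C)$; and $\varphi(x;b)\in\tp(a/B\acl(C))$, so $\tp(a/B\acl(C))$ divides over $\acl(C)$.

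For $(i)$, the heart of the matter is: if $\tp(a/BC)$ does not divide over $C$ and $d\in\acl(aC)$ is a finite tuple, then $\tp(ad/BC)$ does not divide over $C$. I would argue the contrapositive. Suppose $\chi(x,y;\bar b)\in\tp(ad/BC)$ divides over $C$, witnessed by a $C$-indiscernible $(\bar b_i)_{i<\omega}$ with $\bar b_0=\bar b$ and $\{\chi(x,y;\bar b_i)\}_{i<\omega}$ $k$-inconsistent, and fix $\theta(y;a,c)$ with $c\in C$ such that $\models\theta(d;a,c)$ and $|\theta(\M;a,c)|\le n$. Put $\psi(x;\bar b,c):=\exists y\,(\chi(x,y;\bar b)\wedge\theta(y;x,c))$; then $\psi(x;\bar b,c)\in\tp(a/BC)$, and a pigeonhole argument shows $\{\psi(x;\bar b_i,c)\}_{i<\omega}$ is $(n(k-1)+1)$-inconsistent, since a common realization of $n(k-1)+1$ of these formulas supplies, for each of those indices, a $y$-witness from a pool of size $\le n$, so some witness is shared by $k$ indices, contradicting $k$-inconsistency of the $\chi(x,y;\bar b_i)$. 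Hence $\psi(x;\bar b,c)$ divides over $C$, so $\tp(a/BC)$ divides over $C$. To deduce $(i)$ in general, note $\tp(\acl(AC)/BC)$ divides over $C$ iff $\tp(a'_0/BC)$ does for some finite subtuple $a'_0$ of $\acl(AC)$; by finite character of $\acl$ such an $a'_0$ lies in $\acl(a_0C)$ for a finite $a_0\subseteq A$, and this claim together with monotonicity of $\trt{d}$ propagates non-dividing from $a$ to $a_0$ to $a_0a'_0$ to $a'_0$.

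The step I expect to cause the most trouble is the return from $b'$ to $b$ in $(ii)$: extracting an $\acl(C)$-indiscernible subsequence only yields dividing of $\varphi(x;b')$ for some $C$-conjugate $b'$ of $b$, and one must notice that whether $\varphi(x;b)$ divides over $\acl(C)$ depends only on $\tp(b/C)$, since $\Aut(\M/C)$ fixes $\acl(C)$ setwise (though not pointwise). Everything else is routine: the pigeonhole in $(i)$, and the reductions of both axioms to finite tuples using finite character of $\acl$ and the fact that dividing of a (possibly infinitary) type is always witnessed by a single formula.
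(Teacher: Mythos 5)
Your argument is correct, but it takes a genuinely different route from the paper's. The paper handles both axioms by citing two standard facts about indiscernibles: for $(i)$ it uses the characterization of non-dividing via copying indiscernible sequences (\cite[Corollary 7.1.5]{TZ}) together with the fact that an $AC$-indiscernible sequence is automatically $\acl(AC)$-indiscernible, and for $(ii)$ it is a two-line argument from the same fact applied to the base: a $C$-indiscernible sequence starting at an enumeration of $B$ is already $\acl(C)$-indiscernible, so the witness $a'$ supplied by $A\trt{d}_{\acl(C)}B$ works over $C$ as well. You instead argue at the level of single formulas: for $(i)$ the classical counting/pigeonhole argument with an algebraic formula $\theta$, which avoids \cite[Corollary 7.1.5]{TZ} entirely and even yields the quantitative $(n(k-1)+1)$-inconsistency; for $(ii)$ a formula-level equivalence of dividing over $C$ and over $\acl(C)$, proved by stretching the witness, re-extracting over $\acl(C)$, and then correcting the base point by an automorphism over $C$ (which fixes $\acl(C)$ setwise) --- exactly the subtlety you flag, and the correct one to flag. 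Your reductions of both axioms to finite tuples via finite character of $\acl$ and the formula-witness of dividing are also fine. In short: the paper's proof is shorter because it leans on \cite{TZ}; yours is more self-contained and local.

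Two small repairs are needed. In $(i)$, the pool bound must hold for the common realization $a^*$ of the $\psi(x;\bar b_i,c)$, not just for $a$: as written you only know $|\theta(\M;a,c)|\le n$. Replace $\theta(y;x,c)$ by $\theta(y;x,c)\wedge\exists^{\le n}y'\,\theta(y';x,c)$; this is still satisfied by $(d,a)$, so $\psi$ remains in $\tp(a/BC)$, and now every instance $\theta(\M;a^*,c)$ has at most $n$ solutions, so the pigeonhole goes through. In $(ii)$, Erd\H{o}s--Rado does not by itself produce a literal $\acl(C)$-indiscernible subsequence of the long sequence; what it produces is an $\acl(C)$-indiscernible sequence realizing the EM-type of the long sequence over $\acl(C)$. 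That is all you need: $k$-inconsistency of the $\varphi$-instances is part of the EM-type, and the first term of the extracted sequence is still a $C$-conjugate of $b$, so your automorphism step applies unchanged. (Alternatively, the forward direction is immediate from \cite[Exercise 7.1.1]{TZ}: the original $C$-indiscernible witness is already $\acl(C)$-indiscernible, which is the shortcut the paper takes.)
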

\begin{proof}
    The second claim follows from the first by Lemma \ref{lem:PAC-AE-test} and the fact that $\trt{d}$ satisfies monotonicity and base monotonicity. So it remains to show $\trt{d}$ satisfies $(i)$ and $(ii)$ of Lemma \ref{lem:PAC-AE-test}. 

    For axiom $(i)$, recall from \cite[Corollary 7.1.5]{TZ} that $A\trt{d}_C B$ if and only if for any $C$-indiscernible sequence $(b)_{i<\omega}$, with $b_0$ enumerating $B$, there is  an $AC$-indiscernible sequence $(b'_i)_{i<\omega}$ with the same type over $BC$ as $(b_i)_{i<\omega}$. Since any $AC$-indiscernible sequence is automatically $\acl(AC)$-indiscernible (see, e.g, \cite[Exercise 7.1.1]{TZ}), axiom $(i)$ follows.

For axiom $(ii)$, assume $A\trt{d}_{\acl(C)} B$ and let $a$ and $b$ be enumerations of $A$ and $B$, respectively. Suppose $(b_i)_{i<\omega}$ is a $C$-indiscernible sequence with $b_0=b$. Then $(b_i)_{i<\omega}$ is $\acl(C)$-indiscernible. Since $A\trt{d}_{\acl(C)} B$, there is some $a'$ such that $a'b_i\equiv_{\acl(C)}ab$ for all $i<\omega$. Therefore $a'b_i\equiv_{C}ab$ for all $i<\omega$. Thus we have shown $A\trt{d}_C B$.
\end{proof}

\begin{remark}
    It also follows that $\trt{d}$ preserves algebraic closure if and only if, for all $A,B,C\subset\M$, we have
    \[
    \textstyle A\trt{d}_C B~\Rightarrow~\acl(AC)\trt{d}_{\acl(C)}\acl(BC).
    \]
    Indeed, assume the latter condition holds. We show $\trt{d}$ has algebraic extension, and hence preserves algebraic closure by Proposition \ref{prop:PAC-AE-div}. Suppose $A\trt{d}_C B$. By assumption $\acl(AC)\trt{d}_{\acl(C)}\acl(BC)$. Then $A\trt{d}_{\acl(C)}\acl(BC)$ by monotonicity, and thus $A\trt{d}_C\acl(BC)$ by Proposition \ref{prop:PAC-AE-div}.
\end{remark}

\begin{remark}\label{rem:add-to-base}
    Suppose $\ind$ is an invariant ternary relation with monotonicity and base monotonicity (e.g., $\trt{d}$). Consider the following axiom 
    \[
    \textstyle A\ind_C B~\Rightarrow~A\ind_{\acl(C)} B\tag{$\dagger$}
    \]
    Then clearly $(\dagger)$ holds if $\ind$ has algebraic extension. So given that $\trt{d}$ need not always satisfy algebraic extension, it becomes natural to ask whether the weaker axiom $(\dagger)$ holds for $\trt{d}$. The theory constructed in Section \ref{sec:OG} will show that this is not always the case. It is also worth noting that for $\trt{d}$ specifically, $(\dagger)$ is equivalent to the following weakening of algebraic extension: $A\trt{d}_C B\Rightarrow A\trt{d}_CB\acl(C)$.  
\end{remark}

\begin{remark}\label{rem:acl-exist}
    Given $A,C\subset\M$, we always have  $A\trt{d}_C \acl(C)$ (since any $C$-indiscernible sequence is $\acl(C)$-indiscernible). Thus, any failure of algebraic extension over $C$ for $\trt{d}$ must involve a set $B$ not contained in $\acl(C)$. 
\end{remark}

\begin{remark}\label{rem:ext}
In \cite{Adgeo}, algebraic extension is  referred to as a ``weak extension property" since it holds of any invariant ternary relation satisfying extension. Indeed, if $\ind$ satisfies extension and $A\ind_C B$, then there is some $A'\equiv_{BC} A$ such that $A'\ind_C \acl(BC)$, whence $A\ind_C \acl(BC)$  by invariance. 
\end{remark}

Recall (e.g., from \cite[Section 4]{Adgeo}) that forking independence $\trt{f}$ is obtained by ``forcing the extension axiom" on $\trt{d}$. So in light of Proposition \ref{prop:PAC-AE-div} and Remark \ref{rem:ext}, it is reasonable to expect $\trt{f}$ to preserve algebraic closure in any theory. This is again a folklore result (which is actually true in this case), however a proof does not seem to appear in the literature (it is stated for simple theories in \cite[Proposition 5.20]{Casanovas}). So we take the opportunity to provide details.

\begin{proposition}\label{prop:PACfork}
    In any theory,  $\trt{f}$ preserves algebraic closure. 
\end{proposition}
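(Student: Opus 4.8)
The plan is to verify directly that $\trt{f}$ satisfies the four equivalences in Definition~\ref{def:PAC}, leveraging the defining feature of forking independence: $A \trt{f}_C B$ holds if and only if $\tp(a/BC)$ extends to a global type that does not divide over $C$ (equivalently, $\tp(a/BC)$ does not divide over $C$ after passing to a sufficiently saturated model, or: $\tp(a/BC)$ has an extension to $\tp(a'/\M)$ such that $a' \trt{d}_C N$ for every small $N$). Since $\trt{f}$ is invariant and satisfies monotonicity and base monotonicity (standard, see \cite[Section~4]{Adgeo}), by Lemma~\ref{lem:PAC-AE-test} it suffices to show that $\trt{f}$ has algebraic extension and satisfies axioms $(i)$ and $(ii)$ of that lemma.

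For axioms $(i)$ and $(ii)$: axiom $(i)$ (that $A \trt{f}_C B \Rightarrow \acl(AC) \trt{f}_C B$) should follow because $\trt{f}$ is defined by forcing extension onto $\trt{d}$, and $\trt{d}$ already satisfies $(i)$ by Proposition~\ref{prop:PAC-AE-div}; concretely, given a global nondividing extension of $\tp(a/BC)$, one checks that the corresponding global type over $\acl(AC)B$ still does not divide over $C$, using that $AC$-indiscernible sequences are automatically $\acl(AC)$-indiscernible. Axiom $(ii)$ (that $A \trt{f}_{\acl(C)} B \Rightarrow A \trt{f}_C B$) is the more delicate of the two: here I would use the characterization of forking via Lascar strong types or the fact that a formula forks over $C$ iff it forks over $\acl(C)$ — this is essentially because dividing over $C$ and dividing over $\acl(C)$ agree (Proposition~\ref{prop:PAC-AE-div}, axiom $(ii)$ for $\trt{d}$), and forking is a finite disjunction of dividing. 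So if $\tp(a/BC)$ forks over $C$, it implies a finite disjunction each of whose disjuncts divides over $C$, hence over $\acl(C)$, contradicting $A \trt{f}_{\acl(C)} B$.

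For algebraic extension, i.e. $A \trt{f}_C B \Rightarrow A \trt{f}_C \acl(BC)$: this is Remark~\ref{rem:ext} applied to $\trt{f}$, since $\trt{f}$ satisfies the full extension axiom (this is the standard fact that nonforking extensions always exist — extension for $\trt{f}$ is exactly the statement that every nonforking type has a nonforking extension to any larger domain). Thus algebraic extension is automatic.

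The main obstacle I anticipate is axiom $(ii)$ — pinning down cleanly why forking over $C$ and forking over $\acl(C)$ coincide without circularity. The cleanest route is: a formula $\varphi(x,b)$ forks over $C$ iff it implies a finite disjunction $\bigvee_{j<k}\psi_j(x,b_j)$ with each $\psi_j(x,b_j)$ dividing over $C$; since $\trt{d}$ satisfies axiom $(ii)$ (Proposition~\ref{prop:PAC-AE-div}), each $\psi_j(x,b_j)$ dividing over $\acl(C)$ is equivalent to it dividing over $C$; hence $\varphi(x,b)$ forks over $C$ iff it forks over $\acl(C)$. Applying this to all formulas in $\tp(a/BC)$ gives that $\tp(a/BC)$ forks over $C$ iff it forks over $\acl(C) \cup B$, which is exactly axiom $(ii)$ for $\trt{f}$ after noting $\tp(a/BC)$ forks over $\acl(C)$ iff $\tp(a/\acl(C)B)$ does (monotonicity and the fact that adding algebraic parameters over $C$ doesn't change forking). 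One then has to be slightly careful that "$\tp(a/BC)$ forks over $C$" unwinds correctly through the $\acl(C)$-vs-$C$ comparison, but this is bookkeeping rather than a genuine difficulty.
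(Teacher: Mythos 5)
Your proposal is correct in substance and shares the paper's skeleton (reduce via Lemma~\ref{lem:PAC-AE-test} to algebraic extension plus axioms $(i)$ and $(ii)$, with algebraic extension coming for free from extension for $\trt{f}$ as in Remark~\ref{rem:ext}), but it diverges in how the two axioms are verified. The paper stays entirely at the level of complete types and the ``forced extension'' description of $\trt{f}$: to check $(i)$ or $(ii)$ one fixes $D\supseteq B$, takes the $A'\equiv_{BC}A$ with $A'\trt{d}_C D$ (respectively $A'\trt{d}_{\acl(C)}D$) supplied by the hypothesis, and applies Proposition~\ref{prop:PAC-AE-div} to $A'$; no formula-level analysis or global types are needed. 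Your route for $(ii)$ instead goes through the syntactic definition of forking (a formula forks over $C$ iff it implies a finite disjunction of formulas dividing over $C$) together with the transfer ``a formula divides over $C$ iff it divides over $\acl(C)$,'' and your route for $(i)$ is the paper's argument rephrased via global nondividing extensions. Both work; the paper's version is slightly more economical because it reuses Proposition~\ref{prop:PAC-AE-div} verbatim, while yours makes visible the (true and standard) formula-level statement about forking over $C$ versus $\acl(C)$, which is of independent interest.

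One caveat: the citation you lean on for the key step of $(ii)$ is not quite right. Proposition~\ref{prop:PAC-AE-div} is a statement about complete types, and the direction you actually need --- $\psi_j(x,b_j)$ divides over $C$ implies it divides over $\acl(C)$ --- does not follow formally from it. It is nevertheless immediate from the same ingredient that drives that proposition: a $C$-indiscernible sequence witnessing dividing of $\psi_j(x,b_j)$ over $C$ is automatically $\acl(C)$-indiscernible (\cite[Exercise 7.1.1]{TZ}), and in particular all its terms realize $\tp(b_j/\acl(C))$, so the very same sequence witnesses dividing over $\acl(C)$. With that one-line substitution (and the routine observation that a global complete type forks over $C$ iff it divides over $C$, which you implicitly use in your characterization of $\trt{f}$), your argument goes through.
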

\begin{proof}
     Recall that $\trt{f}$ satisfies  monotonicity, base monotonicity, and extension (hence algebraic extension by Remark \ref{rem:ext}). So it suffices to show that $\trt{f}$ satisfies axioms $(i)$ and $(ii)$ of Lemma \ref{lem:PAC-AE-test}.

    For axiom $(i)$, suppose $A\trt{f}_C B$. In order to show $\acl(AC)\trt{f}_C B$, we need to fix $D\supseteq B$ and find $E\equiv_{BC} \acl(AC)$ such that $E\trt{d}_C D$. Since $A\trt{f}_C B$, there is $A'\equiv_{BC} A$ such that $A'\trt{d}_C D$. So $\acl(A'C)\trt{d}_C D$ by Proposition \ref{prop:PAC-AE-div}. Thus we can take $E=\acl(A'C)$. 

For axiom $(ii)$, suppose $A\trt{f}_{\acl(C)} B$. In order to show $A\trt{f}_C B$, we need to fix $D\supseteq B$ and find $A'\equiv_{BC} A$ such that $A'\trt{d}_C D$. Since $A\trt{f}_{\acl(C)} B$, there is $A'\equiv_{B\acl(C)} A$ such that $A'\trt{d}_{\acl(C)} D$. Then $A'\equiv_{BC}A$, and $A'\trt{d}_C D$ by Proposition \ref{prop:PAC-AE-div}.
\end{proof}

\begin{corollary}
    If forking and dividing are the same for complete types (e.g., if $T$ is simple), then $\trt{d}$ preserves algebraic closure.
\end{corollary}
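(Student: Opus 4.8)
The plan is to derive this immediately from Proposition~\ref{prop:PACfork} together with Proposition~\ref{prop:PAC-AE-div}. Recall that the statement to prove is: if $\trt{f}$ and $\trt{d}$ coincide for complete types, then $\trt{d}$ preserves algebraic closure. The hypothesis "forking and dividing are the same for complete types" means precisely that $A\trt{d}_C B \iff A\trt{f}_C B$ for all $A,B,C\subset\M$ (using the standard translation between the type-level statements and the ternary relations, with $A$ enumerated by a tuple $a$, since $\tp(a/BC)$ forks over $C$ iff it divides over $C$ exactly when the two relations agree). In particular, $\trt{d}$ and $\trt{f}$ are literally the same ternary relation under this hypothesis.

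**Then I would conclude** as follows. By Proposition~\ref{prop:PACfork}, $\trt{f}$ preserves algebraic closure in any theory. Since $\trt{d} = \trt{f}$ under our hypothesis, $\trt{d}$ preserves algebraic closure as well. Alternatively, and perhaps more in keeping with the surrounding development, one can invoke Proposition~\ref{prop:PAC-AE-div}: it suffices to check that $\trt{d}$ has algebraic extension. But $\trt{f}$ has algebraic extension by Remark~\ref{rem:ext} (since $\trt{f}$ satisfies the extension axiom), and $\trt{d} = \trt{f}$, so $\trt{d}$ has algebraic extension, hence preserves algebraic closure by Proposition~\ref{prop:PAC-AE-div}.

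**The only point requiring any care** — and it is genuinely routine — is the translation between the phrase "forking and dividing are the same for complete types" and the equality of the ternary relations $\trt{d}$ and $\trt{f}$; this is built into Definition~\ref{def:relations}. The parenthetical "(e.g., if $T$ is simple)" needs no separate argument, since it is classical that forking equals dividing for complete types in simple theories. I do not anticipate any real obstacle here; the corollary is essentially a one-line consequence of the two preceding propositions, which is presumably why the authors state it without proof in the excerpt.
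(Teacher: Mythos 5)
Your proposal is correct and matches the paper's intended argument: the corollary is stated without proof precisely because, once forking and dividing coincide for complete types, $\trt{d}=\trt{f}$ and the result is immediate from Proposition~\ref{prop:PACfork} (equivalently, from extension for $\trt{f}$ via Remark~\ref{rem:ext} together with Proposition~\ref{prop:PAC-AE-div}). Both routes you give are the ones implicit in the paper, so there is nothing to correct.
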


\subsection{M-independence}\label{sec:Mind}

We now turn our focus to the relationship between dividing independence and M-independence. In \cite{Adgeo}, Adler uses the erroneous Remark 5.4(3) to conclude Remark 5.4(4), which says that $\trt{d}$ implies $\trt{M}$ in any theory.  The theory constructed in Section \ref{sec:GBF} will in fact also serve as a counterexample to \cite[Remark 5.4(4)]{Adgeo} (see Corollary \ref{cor:GBFM}). However, we should note that Adler's primary motivation for the entirety of \cite[Remark 5.4]{Adgeo} is to obtain $\trt{f}\Rightarrow\trt{\thorn}$. This implication, which is the more important one for the purposes of developing thorn-forking, is still true, since $\trt{f}$ satisfies extension.

\begin{remark}\label{rem:dtoa}
Although the implication $\trt{d}\Rightarrow\trt{M}$ does not hold in general, the weaker implication $\trt{d}\Rightarrow\trt{a}$ is always true. However, proofs of this fact in the literature often quote \cite[Remark 5.4(3)]{Adgeo}. This is further  discussed  in \cite[Section 2]{CoHa}, where a direct proof of $\trt{d}\Rightarrow\trt{a}$ is given using P. M. Neumann's Lemma (see \cite[Proposition 2.3]{CoHa}; the argument is similar to a Mathematics Stack Exchange post of the second-named author \cite{KrMSE1}). As noted in \cite[Remark 4.12]{CoHa}, there is another quick proof of $\trt{d}\Rightarrow\trt{a}$ using extension for $\trt{a}$. This argument is similar to another Mathematics Stack Exchange post of the second-named author \cite{KrMSE2}, and we copy it here for completeness.

 Assume $A\trt{d}_C B$, and let $a$ and $b$ enumerate $A$ and $B$. By extension for $\trt{a}$ and Erd\H{o}s-Rado, one can construct a $C$-indiscernible sequence $(b_i)_{i<\omega}$ such that $b_0=b$ and $b_i\trt{a}_C b_{<i}$ for all $i<\omega$. By \cite[Corollary 7.1.5]{TZ} (and \cite[Exercise 7.1.1]{TZ}), there is an $\acl(a C)$-indiscernible sequence $(b'_i)_{i<\omega}$ such that $(b'_i)_{i<\omega}\equiv_{b_0 C}(b_i)_{i<\omega}$. Therefore
 \begin{align*}
 \acl(a C)\cap \acl(b C) &= \acl(a C)\cap \acl(b'_0 C)&\text{ (since $b'_0=b_0=b$)}\\
 &\seq \acl(b'_1 C)\cap\acl(b'_0C) &\text{ (since $b'_0\equiv_{\acl(a C)}b'_1$)}\\
 &=\acl(C) &\text{ (since $b'_0 b'_1\equiv_C b_0 b_1$ and $\textstyle b_1\trt{a}_C b_0$).}
 \end{align*} 
 So $A\trt{a}_C B$, as desired.

A direct proof of extension for $\trt{a}$ is given by Adler in \cite[Proposition 1.5(1)]{Adgeo}. A shorter proof using P. M. Neumann's Lemma can be found in \cite[Proposition 2.1]{CoHa}. (Both \cite{Adgeo} and \cite{CoHa} actually focus on ``full existence" for $\trt{a}$; see  \cite[Fact 4.2]{CoHa}.) 
\end{remark}

\begin{remark}\label{rem:little-m}
    Recall that $\trt{M}$ is often viewed as the result of ``forcing base monotonicity" on $\trt{a}$. We can now see that this is slightly misleading. In particular, it would be desirable for the ternary relation obtained by forcing base monotonicity on $\trt{a}$ to be weaker than any other relation that has base monotonicity and implies $\trt{a}$. But this is not the case for $\trt{M}$, since it is not always weaker than $\trt{d}$ (see Section \ref{sec:GBF}). On the other hand, in \cite[Corollary 4.8]{Adgeo}, Adler refers to a variation of M-independence, which is defined as $A\trt{m}_C B$ if and only if $A\trt{a}_D B$ for all $C\seq D\seq BC$. By definition, $\trt{m}$ has base monotonicity, implies $\trt{a}$, and is weaker than any other relation with those properties (so, e.g., $\trt{d}\Rightarrow\trt{m}$ always holds). Altogether, $\trt{M}$ is really the result of first forcing base monotonicity on $\trt{a}$ to obtain $\trt{m}$, and then forcing algebraic extension on $\trt{m}$. 
\end{remark}

We will now examine some special cases in which one can recover the implication from $\trt{d}$ to $\trt{M}$.

\begin{remark}$~$
\begin{enumerate}
    \item If $\trt{d}$ has algebraic extension over $C\subset\M$ then $\trt{d}_C\Rightarrow \trt{M}_C$. 
    \item It is straightforward to show that $\trt{d}$ always preserves \emph{definable} closure.  Thus if algebraic closure coincides with definable closure, then $\trt{d}$ has algebraic extension and hence $\trt{d}\Rightarrow\trt{M}$.  
    \item If algebraic closure in $T$ is modular then $\trt{d}\Rightarrow\trt{M}$ since, in this case, $\trt{M}$ coincides with $\trt{a}$ (see \cite[Proposition 1.3$(3)$]{Adgeo}). However, the example in Section \ref{sec:OG} has disintegrated  algebraic closure. So it is not the case that if algebraic closure in $T$ is modular then $\trt{d}$ has algebraic extension. 
\end{enumerate}
\end{remark}

Recall that a theory $T$ is \emph{pregeometric} if algebraic closure in $T$ satisfies the exchange property. In this case, given a  tuple $\overline{a}$ from $\M$ and a set $C\subset\M$, we let $\dim(\overline{a}/C)$ denote the size of an $\acl$-basis for $\overline{a}$ over $C$. In pregeometric theories,  M-independence is especially meaningful due to the following characterization. 

\begin{fact}\label{fact:preg}
    Suppose $T$ is pregeometric. Then given $A,B,C\subset\M$, $A\trt{M}_C B$ if and only if  $\dim(\overline{a}/BC)=\dim(\overline{a}/C)$ for any finite tuple $\overline{a}$ from $A$. Moreover, $\trt{M}$ is a strict independence relation in the sense of \cite{Adgeo}.
\end{fact}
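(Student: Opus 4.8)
Write $A\trt{ld}_C B$ for the condition in the statement, i.e.\ $\dim(\overline{a}/BC)=\dim(\overline{a}/C)$ for every finite tuple $\overline{a}$ from $A$; this is just matroid independence of $A$ from $B$ over $C$ in the pregeometry $(\M,\acl)$. It is symmetric in $A$ and $B$ — for finite tuples by the addition formula $\dim(\overline{a}\,\overline{b}/C)=\dim(\overline{a}/C)+\dim(\overline{b}/\overline{a}C)$, and in general by finite character of $\acl$. So the plan is to prove the two assertions as $(\ast)\ \trt{M}=\trt{ld}$ and $(\ast\ast)\ \trt{ld}$ is a strict independence relation; the real content sits in $(\ast)$, and inside $(\ast)$ the direction $\trt{M}\Rightarrow\trt{ld}$ is the crux.

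For $\trt{ld}\Rightarrow\trt{M}$: fix $A\trt{ld}_C B$ and $D$ with $C\seq D\seq\acl(BC)$. Since $\acl(BD)=\acl(BC)$, squeezing $\dim(\overline{a}/D)$ between $\dim(\overline{a}/C)$ (using $C\seq D$) and $\dim(\overline{a}/\acl(BC))=\dim(\overline{a}/BC)=\dim(\overline{a}/C)$ (using $D\seq\acl(BC)$) gives $A\trt{ld}_D B$. It remains to invoke the standard fact that matroid independence over a base $D$ implies $\acl(AD)\cap\acl(BD)=\acl(D)$: given $e$ in the intersection, pick finite $\overline{a}$ from $A$ with $e\in\acl(\overline{a}D)$ and reduce $\overline{a}$ to an $\acl$-basis over $D$, so $\dim(\overline{a}/D)=|\overline{a}|$, hence $\dim(\overline{a}/BD)=|\overline{a}|$ as $A\trt{ld}_D B$; then $e\in\acl(BD)$ gives $\dim(\overline{a}/eD)=|\overline{a}|$, while $e\in\acl(\overline{a}D)$ gives $\dim(\overline{a}\,e/D)=|\overline{a}|$, so the addition formula forces $\dim(e/D)=0$, i.e.\ $e\in\acl(D)$.

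For $\trt{M}\Rightarrow\trt{ld}$ I argue by contraposition. Suppose $\dim(\overline{a}/BC)<\dim(\overline{a}/C)$ for some finite $\overline{a}$ from $A$; by finite character fix a finite $\overline{b}_0$ from $B$ realizing the drop, i.e.\ $\dim(\overline{a}/\overline{b}_0C)=\dim(\overline{a}/BC)$, and replace $\overline{b}_0$ by an $\acl$-basis $b_1,\dots,b_m$ over $C$ (so $\acl(b_1\cdots b_mC)=\acl(\overline{b}_0C)$ and the $b_i$ are $\acl$-independent over $C$). The addition formula, applied twice to $\overline{a}\,\overline{b}_0$ over $C$, turns the drop into $\dim(b_1\cdots b_m/\overline{a}C)<m$, so after reordering $b_m\in\acl(b_1\cdots b_{m-1}\overline{a}C)$. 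Now set $D=\acl(b_1\cdots b_{m-1}C)$. The reason for passing to a basis of a \emph{finite} piece of $B$ is exactly that $b_i\in\acl(\overline{b}_0C)\seq\acl(BC)$, so $C\seq D\seq\acl(BC)$ and $D$ is an admissible base — whereas the ``obvious'' base built from $\overline{a}$ need not lie in $\acl(BC)$. But $b_m\in\acl(\overline{a}D)\seq\acl(AD)$ and $b_m\in\acl(\overline{b}_0C)\seq\acl(BD)$, while $b_m\notin\acl(b_1\cdots b_{m-1}C)=\acl(D)$ by $\acl$-independence of the $b_i$ over $C$. Hence $\acl(AD)\cap\acl(BD)\neq\acl(D)$, so $A\ntrt{M}_C B$. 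Getting this $D$ right is the one genuinely clever point; everything else is dimension bookkeeping.

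Finally $(\ast\ast)$. Invariance, monotonicity, base monotonicity, transitivity, normality, and finite character for $\trt{ld}$ are immediate from monotonicity and additivity of $\dim$ plus finite character of $\acl$; anti-reflexivity holds since $A\trt{ld}_C A$ gives $\dim(\overline{a}/AC)=0$ and hence $\dim(\overline{a}/C)=0$, i.e.\ $A\seq\acl(C)$; symmetry was noted above. The one axiom needing real work is extension. I would reduce it — via the standard derivation of extension from full existence for relations with monotonicity, base monotonicity, normality and transitivity — to full existence: for all $A,B,C$ there is $A'\equiv_C A$ with $A'\trt{ld}_C B$. That I would prove by transfinite recursion along an enumeration of $A$: at stage $i$, realize $\tp(a_i/A_{<i}C)$ by an element $a'_i$ chosen, when this type is non-algebraic, outside $\acl(A'_{<i}BC)$ — possible because a non-algebraic type over a small set has more realizations in $\M$ than $|\acl(A'_{<i}BC)|$ — and a routine computation with the addition formula shows $A'_{\le i}\trt{ld}_C B$ is maintained at each step. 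I expect extension to be the most laborious piece, but not a real obstacle once the dimension characterization $(\ast)$ is available.
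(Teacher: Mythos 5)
The paper does not actually prove this statement: it is recorded as a Fact, with the proof attributed to Adler's unpublished notes \cite{Adpreg} and to Theorems 1.2.12 and 1.2.15 of \cite{dElbee}, so there is no in-paper argument to compare yours against. Judged on its own, your proof is correct and is essentially the expected direct verification. Both directions of the dimension characterization check out: for $\trt{ld}\Rightarrow\trt{M}$, the squeeze using $C\seq D\seq\acl(BC)$ and $\acl(BD)=\acl(BC)$ plus the standard ``independent basis stays independent'' computation; for $\trt{M}\Rightarrow\trt{ld}$, the contrapositive with the base $D=\acl(b_1\cdots b_{m-1}C)$ extracted from a basis of a finite witness $\bar{b}_0$ from $B$ is exactly the right move (note it is the same exchange/basis manoeuvre the paper itself uses, on the other side, in the proof of Proposition \ref{prop:preg}), and your choice of $D$ does lie between $C$ and $\acl(BC)$ as required.

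Two small points on the ``strict independence relation'' half. First, Adler's definition also includes local character, which you never mention; it is immediate here (for each finite $\bar{a}$ from $A$ pick a finite $B_{\bar{a}}\seq B$ with $\dim(\bar{a}/B_{\bar{a}})=\dim(\bar{a}/B)$ and let $C$ be the union of these), so this is an omission to patch rather than a gap. Second, extension is only sketched: the standard reduction to full existence also uses invariance and symmetry (both of which you have), and your recursive construction does work, the invariant being that $a'_i\notin\acl(A'_{<i}C)$ implies $a'_i\notin\acl(A'_{<i}BC)$, which yields $A'\trt{ld}_C B$ by a short additivity computation. Neither point affects the soundness of the approach.
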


As far as we are aware, an explicit statement and proof of Fact \ref{fact:preg} first appeared in unpublished notes by Adler \cite{Adpreg}. See also Theorems 1.2.12 and 1.2.15 of \cite{dElbee}.

\begin{proposition}\label{prop:preg}
    If $T$ is pregeometric then $\trt{d}$ implies $\trt{M}$.
\end{proposition}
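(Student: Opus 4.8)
The plan is to lean on the dimension characterization of $\trt M$ in pregeometric theories from Fact \ref{fact:preg}: it suffices to fix $A,B,C$ with $A\trt d_C B$ and a finite tuple $\overline a$ from $A$, and show $\dim(\overline a/BC)=\dim(\overline a/C)$. By monotonicity of $\trt d$ we have $\overline a\trt d_C B$, and by finite character of $\dim$ in the parameter set there is a finite tuple $\overline b$ from $B$ with $\dim(\overline a/\overline b C)=\dim(\overline a/BC)$, and again $\overline a\trt d_C\overline b$ by monotonicity. So it is enough to prove the following: if $\overline a,\overline b$ are \emph{finite} tuples with $\overline a\trt d_C\overline b$, then $\dim(\overline a/\overline b C)=\dim(\overline a/C)$.

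Suppose toward a contradiction that $\dim(\overline a/\overline b C)<\dim(\overline a/C)$ (the reverse inequality always holds). Since $\overline a,\overline b$ are finite, all the relevant dimensions are finite, so additivity of dimension, $\dim(\overline a\overline b/C)=\dim(\overline a/\overline b C)+\dim(\overline b/C)=\dim(\overline b/\overline a C)+\dim(\overline a/C)$, yields the symmetric strict inequality $\dim(\overline b/\overline a C)<\dim(\overline b/C)$. Write $p=\dim(\overline b/\overline a C)<q=\dim(\overline b/C)$ and $m=\dim(\overline a/C)\le|\overline a|$; note $q-p\ge 1$. The idea is now to build a $C$-indiscernible sequence witnessing that $\tp(\overline a/\overline b C)$ divides over $C$. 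By Fact \ref{fact:preg}, $\trt M$ is a strict independence relation, so in particular it satisfies extension; hence, running the construction from Remark \ref{rem:dtoa} with extension for $\trt M$ in place of extension for $\trt a$, I obtain a $C$-indiscernible sequence $(\overline b^{\,i})_{i<\omega}$ with $\overline b^{\,0}=\overline b$ and $\overline b^{\,i}\trt M_C\overline b^{\,<i}$ for every $i$. By Fact \ref{fact:preg} together with additivity of dimension, this gives $\dim(\overline b^{\,0}\cdots\overline b^{\,\ell-1}/C)=\ell q$ for all $\ell\ge 1$.

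Since $\tp(\overline a/\overline b C)$ does not divide over $C$ and $(\overline b^{\,i})_{i<\omega}$ is $C$-indiscernible with $\overline b^{\,0}=\overline b$, the partial type obtained by placing a copy of $\tp(\overline a/\overline b C)$ over each $\overline b^{\,i}$ is consistent; realizing it by $\overline a^{*}$ gives $\overline a^{*}\overline b^{\,i}\equiv_C\overline a\,\overline b$ for all $i<\omega$, and therefore $\dim(\overline b^{\,i}/\overline a^{*}C)=\dim(\overline b/\overline a C)=p$ for every $i$, while $\dim(\overline a^{*}/C)=\dim(\overline a/C)=m$. Now I compute $\dim(\overline b^{\,0}\cdots\overline b^{\,\ell-1}/\overline a^{*}C)$ two ways: by iterated additivity and base monotonicity of $\dim$ it is $\sum_{r<\ell}\dim(\overline b^{\,r}/\overline b^{\,<r}\overline a^{*}C)\le\sum_{r<\ell}\dim(\overline b^{\,r}/\overline a^{*}C)=\ell p$; and it is $\ge\dim(\overline b^{\,0}\cdots\overline b^{\,\ell-1}/C)-\dim(\overline a^{*}/C)=\ell q-m$. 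Thus $\ell q-m\le\ell p$, i.e.\ $\ell(q-p)\le m\le|\overline a|$ for every $\ell\ge 1$, which is absurd once $\ell>|\overline a|$. I expect the one genuinely delicate point to be the construction of the sequence $(\overline b^{\,i})$: one builds a long $\trt M$-Morley sequence over $C$ using extension for $\trt M$ (available by Fact \ref{fact:preg}) and then extracts a $C$-indiscernible sequence via Erd\H{o}s--Rado, using that the dimension identities $\dim(\overline b^{\,i_1}\cdots\overline b^{\,i_k}/C)=kq$ are properties of the type over $C$ and hence survive the extraction (and a translation by an automorphism over $C$ to arrange $\overline b^{\,0}=\overline b$). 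Everything else is routine manipulation of additivity and base monotonicity of $\dim$.
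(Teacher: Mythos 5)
Your proof is correct, but it takes a genuinely different route from the paper. The paper argues by contrapositive, entirely at the level of the pregeometry: from $A\ntrt{M}_C B$ it takes a witness $e\in(\acl(AD)\cap\acl(BD))\setminus\acl(D)$ for some $C\seq D\seq\acl(BC)$, and two applications of exchange (first trading $e$ for a singleton $a_0$ of $A$, then minimizing over a tuple from $B$) relocate the failure so that $A\ntrt{a}_{C\overline{b}'}B$ for a \emph{finite tuple $\overline{b}'$ from $B$}, with no algebraic closure in the base; then the always-valid implication $\trt{d}\Rightarrow\trt{a}$ (Remark \ref{rem:dtoa}) plus base monotonicity of $\trt{d}$ give $A\ntrt{d}_C B$. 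You instead argue directly through the dimension characterization of Fact \ref{fact:preg}: reduce to finite $\overline{a},\overline{b}$, build an $\trt{M}$-Morley sequence in $\tp(\overline{b}/C)$ using extension for $\trt{M}$ (also supplied by Fact \ref{fact:preg}) and Erd\H{o}s--Rado, and show by additivity that a dimension drop $\dim(\overline{a}/\overline{b}C)<\dim(\overline{a}/C)$ forces $\tp(\overline{a}/\overline{b}C)$ to divide over $C$ --- essentially the classical ``dividing is detected by dimension drop'' argument in geometric settings. Your steps all check out: the finite-character reduction of $\dim$ in the parameter set, the symmetric drop via additivity, the transfer of the dimension identities through extraction and conjugation, and the final count $\ell(q-p)\le m$. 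The trade-offs: your argument needs more machinery (the full strength of Fact \ref{fact:preg}, including that $\trt{M}$ is a strict independence relation, plus indiscernible extraction), whereas the paper's proof uses only exchange and $\trt{d}\Rightarrow\trt{a}$; on the other hand, the paper's proof has the extra payoff, recorded in the remark following the proposition, that in pregeometric theories the witnessing failure lives over $C\overline{b}'$ with $\overline{b}'\seq B$, so $\trt{M}=\trt{m}$ (Remark \ref{rem:little-m}) --- a byproduct your dividing-based argument does not obviously recover.
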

\begin{proof}
    Assume $T$ is pregeometric. Fix $A,B,C\subset\M$ such that $A\ntrt{M}_C B$. Then there is some set $D$ such that $C\seq D\seq \acl(BC)$ and $A\ntrt{a}_D B$. So there is some singleton $e\in (\acl(AD)\cap \acl(BD))\backslash \acl(D)$. Let $\overline{a}$ be a finite tuple from $A$ of minimal length such that $e\in\acl(\overline{a}D)$. Note that $\overline{a}$ is nonempty (since $e\not\in\acl(D)$) and algebraically independent over $D$. Write $\overline{a}=a_0\overline{a}'$ where $a_0$ is a singleton. Then $e\in\acl(a_0\overline{a}'D)\backslash\acl(\overline{a}'D)$, hence by exchange $a_0\in\acl(e\overline{a}'D)\seq \acl(\overline{a}'BC)$. Now let $\overline{b}$ be a finite tuple from $B$ of minimal length such that $a_0\in\acl(\overline{a}'\overline{b}C)$. Again, $\overline{b}$ is nonempty (since $\overline{a}$ is algebraically independent over $D$, and hence over $C$ as well) and algebraically independent over $\overline{a}'C$. Write $\overline{b}=b_0\overline{b}'$ where $b_0$ is a singleton. Then $a_0\in \acl(\overline{a}'b_0\overline{b}'C)\backslash \acl(\overline{a}'\overline{b}'C)$, hence by exchange $b_0\in\acl(\overline{a}\overline{b}' C)$. Recall that $\overline{b}'$ is algebraically independent over $\overline{a}'C$, hence over $C$, so $b_0\not\in\acl(\overline{b}'C)$. Altogether, $b_0$ witnesses $A\ntrt{a}_{C\overline{b}'}B$. Therefore $A\ntrt{d}_{C\overline{b}'}B$. By base monotonicity for $\trt{d}$, and since $\bbar'$ is a tuple from $B$, we conclude $A\ntrt{d}_C B$, as desired.
\end{proof}

\begin{remark}
    The previous proof actually shows that if $T$ is pregeometric then $\trt{M}=\trt{m}$ (see Remark \ref{rem:little-m}), hence $\trt{m}$ has algebraic extension. However the example in Section \ref{sec:OG} is pregeometric, and thus is not the case that $\trt{d}$ has algebraic extension in an arbitrary pregeometric theory.
\end{remark}

\subsection{Conclusion}

In the spirit of Adler's work, a natural thing to do at this point is to construct a new ternary relation by forcing algebraic extension on $\trt{d}$. So we define: 
\[
\textstyle A\trt{da}_C B~\Leftrightarrow~ A\trt{d}_C \acl(BC).
\]
Then $\trt{da}$ satisfies all of the properties that Remarks 5.4(3) and 5.4(4) of \cite{Adgeo} intended for $\trt{d}$. In particular, $\trt{da}$ preserves algebraic closure and we have 
\[
\textstyle \trt{f}~\Rightarrow~\trt{da}~\Rightarrow~\trt{M}.
\]
Note also that $\trt{da}$ satisfies existence (by Remark \ref{rem:acl-exist}). Moreover, $\trt{f}=\trt{da}$ if and only if forking and dividing are the same for complete types over algebraically closed sets.

Altogether, perhaps $\trt{da}$ is the more natural notion of ``dividing independence" in an arbitrary theory. For example, by replacing $\trt{d}$ with $\trt{da}$, we obtain the following variations of the (formerly) open questions discussed in the introduction.

\begin{question}\label{Q:new}
Let $T$ be a complete theory.
\begin{enumerate}
    \item Suppose $\trt{f}$ satisfies existence. Do $\trt{f}$ and $\trt{da}$ coincide?
    \item Suppose $T$ is $\NSOP$ (or even just $\NSOP_1$). Do $\trt{f}$ and $\trt{da}$ coincide (even just over models)?
\end{enumerate}
\end{question}

Implicit in these questions is the third question of whether there is an $\NSOP$ theory in which $\trt{f}$ fails existence.  In fact, this is open even for $\NSOP_1$ theories, and is a question of notable significance for this region   due to the results of \cite{DKR}. We also note that for $T$ $\NSOP_1$,  the second question is a stronger version of \cite[Question 9.19]{KaRam}, which asks whether $A\trt{f}_M N\Leftrightarrow A\trt{d}_M N$ for all sets $A$ and models $M\prec N\prec\M\models T$.

\section{The Counterexamples}\label{sec:examples}

\subsection{Dense circular order with unordered pairs}\label{sec:DCO}

In this section, we describe a very straightforward example of the failure of algebraic extension for $\trt{d}$ (refuting \cite[Remark 5.4(3)]{Adgeo}).

Let $T=\Th(\Q,\cyc)^{\eq}$, where 
\[
\cyc(x,y,z)\miff (x<y<z) \vee (z<x<y)\vee (y<z<x).
\]
Let $O$ be the home sort, and let $P$ be the sort corresponding to the definable equivalence relation $E$ defined by $(x,y)E(x',y')$ if and only if $\{x,y\} = \{x',y'\}$. So the elements of $P$ can be identified with the unordered pairs and singletons from $O$. Let $q\colon O^2\to P$ be the quotient map. Working in $\M\models T$, let $a,d_1,d_2$ be three distinct elements of the home sort with $\cyc(d_1,a,d_2)$, and let $b=q(d_1,d_2)$.

\begin{claim}
$a\trt{d}_\emptyset b$, but $a\ntrt{d}_\emptyset\acl(b)$.
\end{claim}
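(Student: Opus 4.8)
The plan is to analyze the structure of $\Th(\Q,\cyc)$ concretely: a circular order is homogeneous, so types are controlled by the cyclic order relation, and $\acl$ in the home sort is trivial. The key observation is that $b = q(d_1,d_2)$ is an element of the imaginary sort $P$ that codes the unordered pair $\{d_1,d_2\}$, and $\acl(b)$ contains (interdefinably) the pair of home-sort elements $d_1, d_2$ — or rather, $d_1$ and $d_2$ are the two roots of a ``quadratic'' over $b$, so $\{d_1,d_2\}\subseteq\acl(b)$ while neither $d_1$ nor $d_2$ individually lies in $\dcl(b)$.

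First I would prove $a\trt{d}_\emptyset b$. Since $\emptyset$ is not an extension base here, I cannot just invoke forking machinery; instead I work directly with the definition via $\emptyset$-indiscernible sequences (using \cite[Corollary 7.1.5]{TZ}). Take any $\emptyset$-indiscernible sequence $(b_i)_{i<\omega}$ with $b_0 = b$; each $b_i = q(d_1^i, d_2^i)$ for some unordered pair, and by indiscernibility all these pairs ``look the same.'' I want to find $a'\equiv_b a$ with $a'$ realizing the same type over each $b_i$. The type of $a$ over $b$ records the cyclic-order position of $a$ relative to the pair $\{d_1,d_2\}$; since $\cyc(d_1,a,d_2)$, the element $a$ sits ``between'' the two points of the pair (in one of the two arcs). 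The crucial point is that a dense circular order has enough room: one can find a single point $a'$ simultaneously inside the appropriate arc determined by each pair $\{d_1^i,d_2^i\}$ — this requires checking that the relevant arcs have a common point, which follows from density together with the combinatorial configuration forced by indiscernibility (essentially, the arcs are nested or at least have pairwise, hence by compactness/finite-intersection, total, common intersection). Actually the cleanest route may be to exhibit such an $a'$ by an explicit back-and-forth / quantifier-elimination argument rather than reasoning about arcs.

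Next, the easy half: $a\ntrt{d}_\emptyset \acl(b)$. Here I use that $d_1, d_2\in\acl(b)$. Consider the home-sort element $d_1$ (or the pair $(d_1,d_2)$ as a tuple in $\acl(b)$). Build an $\emptyset$-indiscernible sequence $(d_1^i d_2^i)_{i<\omega}$ in which the pairs are pairwise ``linked'' — e.g., arrange the $2\omega$ points $d_1^0 < d_1^1 < d_1^2 < \cdots$ and $\cdots < d_2^2 < d_2^1 < d_2^0$ so that no single point can lie in $\cyc(d_1^i, \cdot, d_2^i)$ for all $i$ (the nested arcs shrink to empty). Alternatively, and more robustly, arrange the pairs so that the formula $\cyc(x_1, y, x_2)$ (asserting $y$ is in the distinguished arc) is $k$-inconsistent along the sequence for some finite $k$. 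Then $\tp(a / \acl(b))$, which contains this formula about the position of $a$ relative to $d_1, d_2$, divides over $\emptyset$. So $a\ntrt{d}_\emptyset \acl(b)$, completing the claim.

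The main obstacle is the positive direction $a\trt{d}_\emptyset b$: one must show that for \emph{every} $\emptyset$-indiscernible sequence starting with $b$ (in the sort $P$), the type $\tp(a/b)$ does not divide, i.e., can be extended to an $\{b_i : i<\omega\}$-consistent type. The subtlety is that $b$ lives in an imaginary sort, so ``the same type over each $b_i$'' means the same position relative to each \emph{unordered} pair, and one must rule out a configuration of pairs with empty common ``between''-region. The resolution is that such a bad configuration would have to be non-indiscernible: in a dense circular order, any $\emptyset$-indiscernible sequence of unordered pairs has the property that the distinguished arcs (the ones in which $a$ sits, determined uniformly by $\tp(a/b)$) have the finite intersection property, because indiscernibility forces a ``convexity'' pattern on the pairs that prevents the arcs from shrinking to nothing. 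I would verify this by a short case analysis on how $\{d_1^i, d_2^i\}$ and $\{d_1^j, d_2^j\}$ interleave for $i<j$, using that this interleaving pattern is constant along the sequence.
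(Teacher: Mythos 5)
Your negative direction is essentially correct once you pass to the $k$-inconsistent configuration, but note that your first suggestion (``nested arcs shrink to empty'') does not witness dividing: a countable chain of nested arcs has the finite intersection property, so in the monster the corresponding instances are simultaneously realized. The correct witness, as you say in your ``more robust'' alternative, is an indiscernible sequence of pairs whose arcs are pairwise disjoint, making $\cyc(d_1^i,x,d_2^i)$ $2$-inconsistent; this is what the paper uses.

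The positive direction, however, contains a genuine gap. Your proposed resolution --- that an $\emptyset$-indiscernible sequence of unordered pairs cannot have empty common ``between''-region, because indiscernibility forces the distinguished arcs to have the finite intersection property --- is false, and is in fact contradicted by your own negative direction: the pairwise-disjoint-arcs sequence you use to show that $\cyc(d_1,x,d_2)$ divides over $\emptyset$ is the image under $q$ of an $\emptyset$-indiscernible sequence of ordered pairs, hence is itself an $\emptyset$-indiscernible sequence in the sort $P$ beginning with a conjugate of $b$, and its arcs are pairwise disjoint. Moreover, ``the arc in which $a$ sits'' is not determined by $\tp(a/b)$ at all: since $b$ codes the \emph{unordered} pair, no formula over $b$ distinguishes the two arcs cut out by $\{d_1,d_2\}$. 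This symmetry is exactly the missing idea, and it is the paper's argument: by quantifier elimination in $(\Q,\cyc)$, for any pairwise distinct $a',d_1',d_2'$ either $\cyc(d_1',a',d_2')$ or $\cyc(d_2',a',d_1')$ holds, and accordingly either $ad_1d_2\mapsto a'd_1'd_2'$ or $ad_1d_2\mapsto a'd_2'd_1'$ is a partial isomorphism carrying $b$ to $q(d_1',d_2')$, so that $a'q(d_1',d_2')\equiv_\emptyset ab$ for \emph{every} such $a'$. Hence, given any $\emptyset$-indiscernible sequence $(b_n)_{n<\omega}$ with $b_0=b$, any home-sort element $a'$ avoiding the points of all the pairs satisfies $a'b_n\equiv_\emptyset ab$ for all $n$; no intersection condition on the arcs is needed, and none can be arranged. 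Your parenthetical suggestion to argue via quantifier elimination rather than arcs is the right instinct, but as written your main strategy would fail on precisely the indiscernible sequences that matter.
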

\begin{proof}
First, since $d_1,d_2\in\acl(b)$, and the formula $\cyc(d_1,x,d_2)$ divides over $\emptyset$, we have $a\ntrt{d}_\emptyset\acl(b)$. So we need to show $a\trt{d}_\emptyset b$. We start with a general remark. Suppose $a'$, $d'_1$, $d'_2$ are any pairwise distinct elements of $O$, and set $b'=q(d'_1,d'_2)$. Then either $\cyc(d'_1,a',d'_2)$, in which case we have an isomorphism $ad_1d_2\to a'd'_1d'_2$, or $\cyc(d'_2,a',d'_1)$, in which case we have an isomorphism $ad_1d_2\to a'd'_2d'_1$. In either case, by quantifier elimination this yields  $a'b'\equiv_\emptyset ab$. 

Now let $(b_n)_{n<\omega}$ be an indiscernible sequence in $\tp(b/\emptyset)$. For each $n$, let $b_n=q(d^n_1,d^n_2)$, and note that $d^n_1\neq d^n_2$. Choose  $a'$ from the home sort distinct from $d^n_1,d^n_2$ for all $n<\omega$. Then, by the above, we have $a'b_n\equiv_\emptyset ab$ for all $n<\omega$, so $a\trt{d}_\varnothing b$, as desired. 
\end{proof}

We conclude that in $T$, $\trt{d}$ fails algebraic extension over $\emptyset$. To recap the discussion from the introduction, note that $T$ is an NIP theory, and that $\emptyset$ is (necessarily) not an extension base for nonforking.

\subsection{Generic binary function with unordered pairs}\label{sec:GBF}

In this section, we give another example of a theory in which $\trt{d}$ fails algebraic extension. Like before, this theory will be the imaginary expansion of a well-known example from the literature. However, in this case we will have better behavior related to forking (compared to the circular order on $\Q$), and thus the failure of algebraic extension for $\trt{d}$ will have more interesting consequences.  

Let $T^\emptyset_f$ be the model completion of the empty theory in a language containing only a binary function symbol $f$. Then $T^\emptyset_f$ is $\NSOP_1$ by \cite{KrRa}, and thus so is the imaginary expansion, which we denote $T^\eq_f$.  We will prove that $T^\eq_f$ has the following properties:
\begin{enumerate}[$(1)$]
\item For any small model $M$, $\trt{d}_M$ does not imply $\trt{M}_M$. So in addition to implying the failure of algebraic extension for $\trt{d}$ (even over models), this also refutes the weaker claim made in \cite[Remark 5.4$(4)$]{Adgeo}.
\item Consequently, $\trt{d}\neq\trt{f}$, even over models, which gives a negative answer to an open question about $\NSOP_1$ theories (discussed in the introduction). 
\item All sets are extension bases for nonforking in $T^\eq_f$.  So this theory also gives a negative answer to \cite[Question A.1]{Adthesis}, which asks whether forking and dividing are always the same for complete types in such a theory.
\end{enumerate}

Toward obtaining these statements, we first recall from Corollaries 3.10 and 3.11 of \cite{KrRa} that $T^\emptyset_f$ has quantifier elimination and the algebraic closure of any set coincides with the substructure generated by that set. Moving now to $T^\eq_f$, we work in a similar setting as in the previous subsection.
Let $O$ be the home sort, and let $P$ be the sort corresponding to the definable equivalence relation $E$ defined by $(x,y)E(x',y')$ if and only if $\{x,y\} = \{x',y'\}$. So as before, the elements of $P$ can be identified with the unordered pairs and singletons from $O$. Let $q\colon O^2\to P$ be the quotient map. We write $\acl^O(x)$ for the elements of the home sort which are algebraic over $x$. 

Let $M$ be a small model of $T^\eq_f$, and write $M^O$ for its home sort. Let $d_1$ and $d_2$ be distinct elements of the home sort, which are not in $M$, such that $f(d_i,d_j) = d_i$ for all $i,j\in \{1,2\}$ and $f(m,d_k) = f(d_k,m) = d_k$ for all $k\in \{1,2\}$ and all $m\in M^O$. Let $b = q(d_1,d_2)$. 

Note that $\acl^O(Mb) = \acl^O(Md_1d_2) = \langle M^O d_1 d_2\rangle = M^O\cup \{d_1,d_2\}$, and the map swapping $d_1$ and $d_2$ and fixing $M$ pointwise is an automorphism of this substructure. Also, $\acl^O(Md_i) = \langle M^Od_i\rangle = M^O\cup \{d_i\}$ for all  $i\in \{1,2\}$. 

Now let $a$ be an element of the home sort such that $f(a,a) = a$, $f(a,m) = f(m,a) = a$ for all $m\in M$, $f(a,d_1) = f(d_1,a) = a$, and $f(a,d_2) = f(d_2,a) = d_1$. Then $\acl^O(Mad_1) = \langle M^Oad_1\rangle = M^O\cup \{a,d_1\}$, while $\acl^O(Mad_2) = \langle M^Oad_2\rangle = M^O\cup \{a,d_1,d_2\} = \acl^O(Mad_1d_2)$.

\begin{claim}
$a\trt{d}_M b$, but $a\ntrt{M}_M b$ and $a\ntrt{d}_M \acl(Mb)$. 
\end{claim}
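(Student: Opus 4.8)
\noindent The plan is to prove the three assertions separately, in increasing order of difficulty.

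\emph{For $a\ntrt{d}_M\acl(Mb)$:} since $b$ codes the pair $\{d_1,d_2\}$ we have $d_1,d_2\in\acl(b)\seq\acl(Mb)$, and since dividing of a complete type passes to every extension of that type, it suffices to show $\tp(a/Md_1d_2)$ divides over $M$. I would use the formula $f(x,d_2)=d_1$, which lies in this type because $f(a,d_2)=d_1$. Since $\acl^O(Md_2)=M^O\cup\{d_2\}$, the type $\tp(d_1/Md_2)$ is non-algebraic, so there are pairwise distinct $d_1=d_1^0,d_1^1,\dots$ with $(d_1^n,d_2)\equiv_M(d_1,d_2)$ for all $n$, and $\{f(x,d_2)=d_1^n:n<\omega\}$ is $2$-inconsistent. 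For $a\ntrt{M}_M b$ I would take $D=M\cup\{d_2\}$, so that $M\seq D\seq\acl(Mb)$; then $d_1\in\acl(aD)$ (as $\acl^O(Mad_2)=M^O\cup\{a,d_1,d_2\}$) and $d_1\in\acl(Mb)\seq\acl(bD)$, while $d_1\notin\acl(D)$ (as $\acl^O(Md_2)=M^O\cup\{d_2\}$, using that the real part of the $\eq$-algebraic closure of a real set is ordinary $\acl$), so $d_1$ witnesses $a\ntrt{a}_D b$.

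\emph{For $a\trt{d}_M b$}, the crux, I would use the characterization that $\tp(a/Mb)$ does not divide over $M$ iff for every $M$-indiscernible sequence $(b^n)_{n<\omega}$ with $b^0=b$ there is $a'$ with $a'b^n\equiv_M ab$ for all $n$. Writing $b^n=q(d_1^n,d_2^n)$, quantifier elimination in $T^\emptyset_f$ together with the explicit form of $\langle M^Od_1d_2\rangle$ forces, by indiscernibility, each $\{d_1^n,d_2^n\}$ to be a two-element subset of $O\setminus M^O$ generating exactly $M^O\cup\{d_1^n,d_2^n\}$, on which $f$ acts by first projection. If the sequence is constant, $a'=a$ works. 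Otherwise the pairs are pairwise distinct, and a short Helly-type argument on the infinite sequence shows that either (a) the pairs are pairwise disjoint, or (b) they all share a common element $c$.

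In case (a) I would adjoin a new element $a'$ by declaring $f(a',d_2^n)=f(d_2^n,a')=d_1^n$, $f(a',d_1^n)=f(d_1^n,a')=a'$, $f(a',a')=a'$, and $f(a',m)=f(m,a')=a'$ for $m\in M^O$; disjointness makes this a well-defined assignment, so (as $T^\emptyset_f$ is the model completion of the empty theory) it is realized by some $a'\in\M$, and the evident isomorphism $\langle M^Oa'd_1^nd_2^n\rangle\cong\langle M^Oad_1d_2\rangle$ over $M^O$ yields $a'b^n\equiv_M ab$. In case (b), writing $b^n=q(c,d^n)$ with the $d^n$ distinct, I would instead adjoin $a'$ with $f(a',d^n)=f(d^n,a')=c$ for all $n$, $f(a',c)=f(c,a')=a'$, $f(a',a')=a'$, and $f(a',m)=f(m,a')=a'$; this is well-defined since the $d^n$ are distinct and $c$ is fixed, and $\langle M^Oa'cd^n\rangle\cong\langle M^Oad_1d_2\rangle$ via $a'\mapsto a$, $c\mapsto d_1$, $d^n\mapsto d_2$, which gives $a'b^n\equiv_M ab$.

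The hard part will be case (b): one must notice that the enumeration of each pair is ours to choose, that the internal $f$-structure of a pair is symmetric (so the common element $c$ may be assigned the ``$d_1$-role'' uniformly in $n$, even though $c$ might be enumerated as $d_2$ inside $b^0$), and that this still recovers $a'b^0\equiv_M ab^0$ because the induced isomorphism sends $q(c,d^0)$ to $q(d_1,d_2)=b$. Establishing the disjoint-versus-common dichotomy for infinite indiscernible sequences of pairs is the other point that needs a brief argument.
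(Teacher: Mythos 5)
Your proposal is correct and takes essentially the same approach as the paper: the negative parts are witnessed exactly as in the paper (the element $d_1\in\acl(Mad_2)\cap\acl(Mb)\setminus\acl(Md_2)$ with $D=Md_2$, and the dividing formula $f(x,d_2)=d_1$), and the positive part is the same explicit construction of a generic $a'$ sending the varying elements of the coded pairs to a fixed one, exploiting the swap symmetry of the unordered pair. The only (cosmetic) difference is the case organization: the paper first makes the enumerated sequence $(b_nd_1^nd_2^n)$ $M$-indiscernible and splits on whether the $d_2^n$ are pairwise distinct or all equal, whereas you split constant/disjoint/common-element via a Helly-type argument, which indeed requires the brief uniformity-from-indiscernibility observation you flag.
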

\begin{proof}
First note that $d_1\in \acl(Mad_2)\cap \acl(Mb)$, but $d_1\notin \acl(Md_2)$. So $a\ntrt{a}_{Md_2} b$. Since $d_2\in \acl(Mb)$, we have $a\ntrt{M}_M b$ and $a\ntrt{d}_M \acl(Mb)$.

Now we show $a\trt{d}_M b$. Let $a'$ be an element of $O$ and $b'$ an element of $P$, with $b' = q(d_1',d_2')$. By quantifier elimination, $ab\equiv_M a'b'$ if and only if there is an isomorphism $\langle Mad_1d_2\rangle\to \langle Ma'd_1'd_2'\rangle$, fixing $M$ pointwise, with $a\mapsto a'$ and either $d_1\mapsto d_1'$ and $d_2\mapsto d_2'$,   or $d_1\mapsto d_2'$ and $d_2\mapsto d_1'$. 

Let $(b_n)_{n< \omega}$ be an $M$-indiscernible sequence in $\text{tp}(b/M)$. For each $n$, let $b_n = q(d^n_1,d^n_2)$, and note that $d^n_1\neq d^n_2$ and $f(d^n_i,d^n_j) = d^n_i$ for all $i,j\in \{1,2\}$. Without loss of generality, we can enumerate $d^n_1$ and $d^n_2$ so that the sequence $(b_nd^n_1d^n_2)_{n< \omega}$ is $M$-indiscernible. 

Case 1: For all $n\neq m$, $d^n_2 \neq d^m_2$. Then we can find an $a'$ such that $f(a',a') = a'$, $f(a',m) = f(m,a') = a'$ for all $m\in M$, $f(a',d^n_1) = f(d^n_1,a') = a'$, and $f(a',d^n_2) = f(d^n_2,a') = d^n_1$ for all $n$. Then $a'b_n \equiv_M ab$ for all $n$.

Case 2: For all $n\neq m$, $d^n_2 = d^m_2$. In this case, if $d^n_1\neq d^m_1$, we cannot find $a'$ such that $f(a',d^n_2) = d^n_1$ and $f(a',d^m_2) = d^m_1$. However, we can find an $a'$ such that $f(a',a') = a'$, $f(a',m) = f(m,a') = a'$ for all $m\in M$,  $f(a',d^n_2) = f(d^n_2,a') = a'$, and $f(a',d^n_1) = f(d^n_1,a') = d^n_2$ for all $n$. Then $a'b_n \equiv_M ab$ for all $n$ (though in this case the witnessing isomorphisms map $d^n_1\mapsto d_2$ and $d^n_2\mapsto d_1$). 

In either case, we have an $a'$ such that $a'b_n \equiv_M ab$ for all $n<\omega$, so $a\trt{d}_M b$. 
\end{proof}

\begin{corollary}\label{cor:GBFM}
$\trt{d}$ does not necessarily imply $\trt{M}$ in general. Moreover, $\trt{d}$ does not necessarily imply $\trt{f}$ in  $\NSOP_1$ theories, even over models. 
\end{corollary}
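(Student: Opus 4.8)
The plan is to derive Corollary \ref{cor:GBFM} directly from the Claim just proved, unpacking how the three conclusions $a\trt{d}_M b$, $a\ntrt{M}_M b$, and $a\ntrt{d}_M\acl(Mb)$ combine with general facts about the relations $\trt{d}$, $\trt{M}$, and $\trt{f}$. For the first assertion, that $\trt{d}$ need not imply $\trt{M}$: this is immediate, since the Claim exhibits $A=a$, $B=b$, $C=M$ with $a\trt{d}_M b$ but $a\ntrt{M}_M b$. No further work is needed beyond observing that $T^\eq_f$ is a genuine complete theory (it is, being the imaginary expansion of $T^\emptyset_f$).

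For the second assertion, that $\trt{d}$ need not imply $\trt{f}$ in $\NSOP_1$ theories even over models, the key point is that $\trt{f}$ always has algebraic extension (Remark \ref{rem:ext}, since $\trt{f}$ satisfies extension; alternatively this is subsumed by Proposition \ref{prop:PACfork}). So from $a\ntrt{d}_M\acl(Mb)$ we cannot have $a\trt{d}_M b \Rightarrow a\trt{f}_M b$ hold in general: indeed, $a\trt{d}_M b$ holds, but if we also had $a\trt{f}_M b$ then algebraic extension for $\trt{f}$ would give $a\trt{f}_M\acl(Mb)$, and since $\trt{f}\Rightarrow\trt{d}$ this would yield $a\trt{d}_M\acl(Mb)$, contradicting the Claim. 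Hence $a\trt{d}_M b$ but $a\ntrt{f}_M b$, witnessing $\trt{d}\neq\trt{f}$ over the model $M$. Finally, recall that $T^\emptyset_f$ is $\NSOP_1$ by \cite{KrRa}, and $\NSOP_1$ is preserved under passing to $T^\eq$, so $T^\eq_f$ is an $\NSOP_1$ theory, completing the argument.

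I do not anticipate a serious obstacle here: the corollary is a formal consequence of the Claim together with the already-recorded fact that $\trt{f}$ has algebraic extension and the implication $\trt{f}\Rightarrow\trt{d}$. The only mild subtlety worth spelling out is the direction of the implication in the second part — one must be careful that the failure is of $\trt{d}\Rightarrow\trt{f}$ and not the (always-true) reverse implication — but this is exactly what the asymmetry between $a\trt{d}_M b$ and $a\ntrt{f}_M b$ records. If anything needs emphasis, it is just the reminder that $\trt{f}$'s algebraic extension is what converts the statement "$a\ntrt{d}_M\acl(Mb)$" into the separation of forking from dividing over $M$.
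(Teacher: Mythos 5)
Your proposal is correct and matches the paper's (implicit) argument: the corollary is drawn directly from the Claim, with the first part witnessed by $a\trt{d}_M b$ together with $a\ntrt{M}_M b$, and the second part obtained exactly as you describe, by combining $a\ntrt{d}_M\acl(Mb)$ with algebraic extension for $\trt{f}$ and the implication $\trt{f}\Rightarrow\trt{d}$ to conclude $a\ntrt{f}_M b$, plus the fact that $\NSOP_1$ passes from $T^\emptyset_f$ to $T^\eq_f$. The paper adds only a supplementary concrete witness (the formula $\exists w_1\exists w_2\,(q(w_1,w_2)=b\land f(x,w_2)=w_1)$ in $\tp(a/Mb)$, which forks but does not divide over $M$), which your abstract derivation makes unnecessary.
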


Explicitly, in $T^\eq_f$ the formula $\exists w_1\exists w_2\,(q(w_1,w_2) = b\land f(x,w_2) = w_1)$ in $\tp(a/Mb)$ forks but does not divide. It implies the disjunction $f(x,d_2)  = d_1\lor f(x,d_1) = d_2$, and each disjunct divides over $M$.

We have now established statements $(1)$ and $(2)$ above. For $(3)$, we need the following general observation.

\begin{proposition}\label{prop:eqtransfer}
    Suppose $T$ is a complete theory with geometric elimination of imaginaries.
    \begin{enumerate}[$(a)$]
    \item Assume $\trt{f}$ satisfies existence in $T$. Then $\trt{f}$ satisfies existence in $T^{\eq}$.
    \item Assume $\trt{f}=\trt{da}$ in $T$. Then $\trt{f}=\trt{da}$ in $T^{\eq}$.
    \end{enumerate}
\end{proposition}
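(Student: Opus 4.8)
Here is how I would approach Proposition~\ref{prop:eqtransfer}. The plan is to use geometric elimination of imaginaries to replace imaginary parameters by interalgebraic real ones, thereby reducing both statements to assertions about tuples of \emph{real} elements over \emph{real} parameter sets, and then to transfer those down to $T$ using the absoluteness of forking and dividing between $T$ and $T^{\eq}$ for real tuples. I will use three ingredients. First, geometric EI says exactly that every element of $\M^{\eq}$ is interalgebraic over $\emptyset$ with a finite tuple from $\M$; hence for every $X\seq\M^{\eq}$ there is a set $X^\sharp\seq\M$ of real elements with $\acl^{\eq}(X^\sharp)=\acl^{\eq}(X)$. Second, $\trt{f}$ preserves algebraic closure in any theory by Proposition~\ref{prop:PACfork}, and so does $\trt{da}$ as noted in Section~\ref{sec:prelim}; combined with monotonicity and base monotonicity, the four equivalences defining preservation of algebraic closure immediately imply that each of $\trt{f}$ and $\trt{da}$, computed in $T^{\eq}$, is unchanged under replacing $A$, $B$, or $C$ by an interalgebraic set in that coordinate. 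In particular, to verify $(a)$ and $(b)$ in $T^{\eq}$ we may always assume $A$, $B$, $C$ are real, by passing to $A^\sharp$, $B^\sharp$, $C^\sharp$. Third, I will use the standard fact that forking and dividing for a (possibly infinite) tuple of real elements over a set of real elements are the same whether computed in $T$ or in $T^{\eq}$. This is the one ingredient that is not purely formal, and I expect its forking half — which must absorb the imaginary parameters that can appear in a witnessing disjunction $\varphi(x,b)\vdash\bigvee_i\psi_i(x,e_i)$ — to be the only place where genuine work is required; I would either cite it or include the routine ``unwinding'' argument showing those disjuncts may be taken over real parameters.

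Granting these, part $(a)$ is short. Existence for $\trt{f}$ in $T^{\eq}$ is the statement $A\trt{f}_C C$ for all $A,C\seq\M^{\eq}$. Using the second ingredient to replace $A$ by $A^\sharp$ and both occurrences of $C$ by $C^\sharp$, it suffices to prove $A\trt{f}_C C$ for real $A,C$; by the absoluteness ingredient this is equivalent to $A\trt{f}_C C$ computed in $T$, which holds because $\trt{f}$ satisfies existence in $T$ by hypothesis.

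For part $(b)$, the implication $\trt{f}\Rightarrow\trt{da}$ holds in every theory (it follows from algebraic extension for $\trt{f}$, Remark~\ref{rem:ext}, together with $\trt{f}\Rightarrow\trt{d}$), so in $T^{\eq}$ it remains to show $A\trt{da}_C B\Rightarrow A\trt{f}_C B$. By the reduction above we may take $A,B,C$ real. Then $A\trt{da}_C B$ means, by the definition of $\trt{da}$, that $A\trt{d}_C\acl^{\eq}(BC)$ in $T^{\eq}$; since $\acl^T(BC)\seq\acl^{\eq}(BC)$, \emph{monotonicity alone} gives $A\trt{d}_C\acl^T(BC)$ in $T^{\eq}$. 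I want to stress this point: no appeal to algebraic extension for $\trt{d}$ is made, which is precisely what allows the argument to go through despite that axiom's failure — we push $\trt{da}$ down to an instance of $\trt{d}$ over the real set $\acl^T(BC)$ and never try to push it back up. Now the absoluteness of dividing (for the real tuple enumerating $A$ over the real set $\acl^T(BC)\supseteq C$) yields $A\trt{d}_C\acl^T(BC)$ in $T$, i.e.\ $A\trt{da}_C B$ in $T$; the hypothesis $\trt{f}=\trt{da}$ in $T$ gives $A\trt{f}_C B$ in $T$; and absoluteness of forking returns $A\trt{f}_C B$ in $T^{\eq}$. The only delicate step, as flagged, is the forking direction of the absoluteness ingredient; the rest is bookkeeping with the preservation-of-algebraic-closure equivalences of Lemma~\ref{lem:PAC-AE-test} and Proposition~\ref{prop:PAC-AE-div}.
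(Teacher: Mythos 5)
Your proposal is correct and follows essentially the same route as the paper's proof: use geometric elimination of imaginaries together with preservation of algebraic closure for $\trt{f}$ and $\trt{da}$ to reduce to real sets, drop to the real algebraic closure by monotonicity, transfer dividing down to $T$ and forking back up to $T^{\eq}$, and recover the original sets via Proposition~\ref{prop:PACfork}. The absoluteness of forking/dividing for real tuples over real sets, which you rightly flag as the only nontrivial ingredient, is likewise used (implicitly, as folklore) in the paper's argument.
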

\begin{proof}
    We prove part $(b)$ and leave $(a)$ (which is similar and easier) to the reader. 
    Fix $A,B,C\seq \M^{\eq}$ and assume $A\trt{da}_C B$ in $T^\eq$. We need to show $A\trt{f}_C B$ in $T^\eq$. Without loss of generality, assume $C\seq A\cap B$. By geometric elimination of imaginaries, there are real sets $A',B',C'\seq\M$ such that $\acl^{\eq}(A)=\acl^{\eq}(A')$, $\acl^{\eq}(B)=\acl^{\eq}(B')$, and $\acl^{\eq}(C)=\acl^{\eq}(C')$. Since $\trt{da}$ preserves algebraic closure, we have $\acl^{\eq}(A')\trt{d}_{\acl^{\eq}(C')}\acl^{\eq}(B')$. By monotonicity for $\trt{d}$ and Proposition \ref{prop:PAC-AE-div}, this yields $\acl(A')\trt{d}_{\acl(C')}\acl(B')$ in $T^{\eq}$, and hence also in $T$. So $\acl(A')\trt{da}_{\acl(C')}\acl(B')$ in $T$, and thus $\acl(A')\trt{f}_{\acl(C')}\acl(B')$ in $T$ by assumption. Therefore $\acl(A')\trt{f}_{\acl(C')}\acl(B')$ in $T^{\eq}$, which yields $A\trt{f}_C B$ in $T^{\eq}$ using Proposition~\ref{prop:PACfork} and the choice of $A'$, $B'$, and $C'$.
\end{proof}

Now, to obtain statement $(3)$ above, we recall the results from \cite{KrRa} that $T^\emptyset_f$ has weak elimination of imaginaries and satisfies $\trt{f}=\trt{d}$ (see Section \ref{sec:KrRam} for further related discussion). So $\trt{f}$ satisfies existence in $T^\eq_f$ by the previous proposition. Note that this conclusion only requires part $(a)$; however part $(b)$ gives us the stronger statement that $\trt{f}=\trt{da}$ in $T^\eq_f$ (c.f. Question \ref{Q:new}).

\begin{remark}
Since $\trt{f}=\trt{d}$ is true in $T^\emptyset_f$ but not in $T^\eq_f$, our example shows that the property $\trt{f} = \trt{d}$ does not transfer from $T$ to $T^\eq$, even when $T$ has weak elimination of imaginaries. This is in contrast to the properties $\trt{f} = \trt{da}$ and existence for $\trt{f}$ in Proposition~\ref{prop:eqtransfer}.
\end{remark}

\subsection{The original counterexample}\label{sec:OG}

In this section, we construct a complete theory  for which $\trt{d}$ does not have algebraic extension due to a failure of the weaker axiom discussed in Remark \ref{rem:add-to-base}. Specifically, there are  $a,b\in \M$ such that $a\trt{d}_\emptyset b$ and $a\ntrt{d}_{\acl(\emptyset)} b$. This is also the original counterexample first discovered by the second-named author to refute \cite[Remark 5.4(3)]{Adgeo}.

The example we construct will be the theory of a Fra\"iss\'e limit for a Fra\"iss\'e class with free amalgamation. We briefly recall the definition of free amalgamation. Given a (many-sorted) language $\cL$ with only relation symbols and constant symbols, and three $\cL$-structures $A$, $B$, $C$, where $C$ is a common substructure of $A$ and $B$, the \emph{free amalgam} of $A$ and $B$ over $C$ is the $\cL$-structure $D$ defined as follows. For each sort $S$, $S(D) = S(A)\sqcup_{S(C)} S(B)$, the disjoint union of $S(A)$ and $S(B)$ over $S(C)$. For each relation symbol $R$ in $\cL$, $R^D = R^A\cup R^B$. That is, no relations hold in $D$, other than those which hold in $A$ and in $B$. The constant symbols in $\cL$ receive the same interpretation in $D$ as in $A$, $B$, and $C$. We say that a class of finite $\cL$-structures has free amalgamation if whenever $A$, $B$, and $C$ are in the class, and $C$ is a common substructure of $A$ and $B$, the free amalgam of $A$ and $B$ over $C$ is again in the class. 

We now describe our example. Consider a language $\cL'$ with:
\begin{itemize}
\item Three sorts: $O$, $G$, and $C$.
\item Two constant symbols $0$ and $1$ of type $C$. 
\item A relation symbol $R$ of type $G\times G \times C$. 
\item A relation symbol $E$ of type $O\times G \times C$. 
\end{itemize}
Let $\mathcal{K}$ be the class of finite $\cL'$-structures satisfying the following conditions:
\begin{enumerate}
\item $0\neq 1$, and for all $c\in C$, $c = 0$ or $c = 1$.
\item The binary relations $R(x,y,0)$ and $R(x,y,1)$ are disjoint graph relations on $G$, i.e., they are each symmetric and anti-reflexive, and for all $v,w\in G$, it is not the case that $R(v,w,0)$ and $R(v,w,1)$. 
\item For all $v,w\in G$ and $c\in C$, if $R(v,w,c)$, then there is no $o\in O$ such that $E(o,v,c)$ and $E(o,w,c)$. 
\end{enumerate}

It is easy to check that $\mathcal{K}$ is a Fra\"{i}ss\'{e} class with free amalgamation. Let $M'$ be the Fra\"{i}ss\'{e} limit of $\mathcal{K}$. 

Now let $\cL$ be the sub-language of $\cL'$ which omits the constant symbols $0$ and $1$.
Let $M$ be the reduct of $M'$ to $\cL$, and set $\TOG=\Th(M)$. We observe the following:
\begin{enumerate}[$(i)$]
\item $\TOG$ is $\aleph_0$-categorical. Indeed, it is a reduct of a Fra\"{i}ss\'{e} limit of a class of finite structures in a finite language with no function symbols.
\item For any $A\subseteq M$, $\acl(A) = A\cup C$. Indeed, $A\cup C$ is the domain of a substructure of $M'$. Since amalgamation in $\mathcal{K}$ is disjoint,  $A\cup C$ is algebraically closed in $M'$, so it remains algebraically closed in the reduct $M$. 
\item There is an automorphism $\sigma$ of $M$ swapping the two elements of $C$. This can be established by a back-and-forth argument.
\item For any finite tuples $a$ and $b$ from $M$, $\tp(a) = \tp(b)$ if and only if there is an isomorphism of $\cL$-structures $f\colon \acl(a)\to \acl(b)$ such that $f(a) = b$. The forward direction follows from $\aleph_0$-homogeneity of $M$. For the converse, suppose $f\colon \acl(a)\to \acl(b)$ is such an isomorphism. If $f$ is the identity on $C$, then it is an isomorphism between $\cL'$-substructures of $M'$, so it extends to an automorphism of $M'$ moving $a$ to $b$, and hence $\tp(a)  = \tp(b)$. If $f$ swaps the two elements of $C$, let $f' = f\circ \sigma^{-1}\colon \sigma(\acl(a))\to \acl(b)$, where $\sigma$ is an automorphism of $M$ as in $(iii)$ above. Then $f'$ is an isomorphism which is the identity on $C$ and such that $f'(\sigma(a)) = b$. Extending $f'$ to an automorphism of $M'$ as above, then pre-composing with $\sigma$, we find an automorphism of $M$ moving $a$ to $b$, so $\tp(a)  = \tp(b)$.
\end{enumerate}

By $\aleph_0$-categoricity, to understand dividing and the properties $\SOP_n$ in $\TOG$, it suffices to work in $M$. We will refer to the elements of $C$ as $0$ and $1$ (even though these constant symbols are not in $\cL$).

\begin{claim}\label{claim:sop}
$\TOG$ is $\SOP_3$ and $\NSOP_4$.
\end{claim}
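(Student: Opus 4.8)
The plan is to verify $\SOP_3$ and $\NSOP_4$ separately, exploiting the presence of the two graph relations $R(x,y,0)$ and $R(x,y,1)$ together with the constraint (3) linking them to $E$, and the fact that algebraic closure just adjoins the two points of $C$.

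For $\SOP_3$, I would use the standard characterization in terms of a formula $\varphi(x,y)$ and an infinite chain of parameters $(a_i)_{i<\omega}$ such that $\{\varphi(x,a_i) : i<\omega\}$ is inconsistent (in fact $2$-inconsistent, or $k$-inconsistent for some fixed $k$) while $\varphi(x,a_j)\wedge\varphi(x,a_i)$ is consistent whenever $i<j$. The natural candidate is to encode a ``directed/ordered'' configuration using the edge relation $R(\cdot,\cdot,0)$ on $G$ as an ``incomparability'' or ``forbidden-pair'' relation: take parameters $a_i = (g_i, g_i')$ in $G$ arranged so that the free amalgamation property lets one realize $\varphi(x, a_i)$ and $\varphi(x, a_j)$ together for $i<j$ but forbids realizing three of them (or two non-consecutive ones) simultaneously because it would force an $R(\cdot,\cdot,0)$-edge between two vertices that already have one, violating anti-reflexivity/the disjointness axiom. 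Concretely I expect $\varphi(x,(u,v))$ to assert something like ``$x\in G$, $R(x,u,0)$, and $\lnot R(x,v,0)$ and no bad $E$-configuration,'' with the $a_i$ chosen along an $R(\cdot,\cdot,0)$-path; the $2$- or $3$-inconsistency then comes from axiom (2) (disjoint graphs) or (3). The key point making this legitimate is the presence of $0$ and $1$ (hence two independent graph relations), which is what blocks the configuration from collapsing. One then checks consistency of the short conjunctions by explicitly exhibiting a finite structure in $\mathcal K$ — this is where free amalgamation and $\aleph_0$-homogeneity of $M$ (item $(iv)$) do the work.

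For $\NSOP_4$, I would argue that $M$ admits a ternary independence relation — the one coming from free amalgamation, i.e. $A\ind_C B$ iff $\acl(AC)$ and $\acl(BC)$ are freely amalgamated inside $M$ over $\acl(C)$ — which satisfies enough of the axioms (in particular a suitable independence theorem over models, or at least the symmetry/transitivity needed) to rule out $\SOP_4$. The cleanest route is to invoke the general machinery for free amalgamation theories (Conant, \cite{CoFA}): free amalgamation Fra\"iss\'e limits are always $\NSOP_4$, and in fact $\TOG$ is literally a reduct of such a limit, with $\acl(A)=A\cup C$ by item $(ii)$, so the $\NSOP_4$ bound transfers to the reduct (a reduct of an $\NSOP_4$ theory is $\NSOP_4$). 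Alternatively, one gives the direct combinatorial argument: an $\SOP_4$ configuration would yield four tuples forming a ``$4$-cycle'' of consistent pairs with the $4$-cycle itself inconsistent, and freely amalgamating the consistent pairs around the cycle produces a model of $\mathcal K$ realizing the whole cycle, a contradiction — here one must be slightly careful that axiom (3), which is not a pure ``no new relations'' constraint, is still preserved under the relevant amalgams (it is, since it only forbids certain $E$-patterns and free amalgamation adds no relations).

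The main obstacle I anticipate is the $\SOP_3$ direction: one must design the formula $\varphi$ and the parameter chain precisely enough that (a) consecutive (or nested) conjunctions are realized — easy via free amalgamation — while (b) the full set is inconsistent for a *structural* reason, and crucially (c) the inconsistency is genuinely $\SOP_3$-type and not already $\SOP_2$/TP${}_2$ or worse, since the claim asserts exactly $\SOP_3$ (and implicitly not $\SOP_2$, though only $\SOP_3$ is stated). Getting the anti-reflexivity of the two graphs, plus constraint (3) on $E$, to produce inconsistency of triples but consistency of pairs is the delicate bookkeeping; I would organize it by first drawing the intended finite ``generic'' configuration realizing any two of the $\varphi(x,a_i)$, then checking that adding a third vertex satisfying all the required $R(\cdot,\cdot,0)$-edges and non-edges is impossible in $\mathcal K$.
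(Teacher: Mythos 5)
Your $\NSOP_4$ half is fine and is exactly the paper's argument: $M'$ is the \Fraisse{} limit of a free amalgamation class, such limits are $\NSOP_4$ (Patel; see \cite[Section 4]{CoFA}), and $\TOG$ is a reduct, so it inherits $\NSOP_4$.

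The $\SOP_3$ half has a genuine gap, and it starts with the criterion you invoke. A single formula $\varphi(x,y)$ and a sequence $(a_i)_{i<\omega}$ with $\{\varphi(x,a_i)\mid i<\omega\}$ $k$-inconsistent but pairwise consistent is \emph{not} a characterization of $\SOP_3$ (as literally stated, with ``$2$-inconsistent,'' it is self-contradictory). Pairwise-consistent, $3$-inconsistent families along indiscernible sequences occur already in stable theories (e.g.\ in $\mathrm{ACF}$, ``$x$ lies on the line through $a_i$ and $b_i$'' for lines in general position), so even if you succeeded in building such a configuration in $\TOG$ it would prove nothing. What is needed is either the directed definition of $\SOP_3$ ($\varphi(x,y)$ admitting an infinite chain but no $3$-cycle) or Shelah's two-formula criterion \cite[Claim 2.19]{Sh500}; the paper uses the latter, with $x$ of sort $O$ (not $G$, as in your candidate): take $\varphi(x;y,y')=E(x,y,0)$, $\varphi'(x;y,y')=E(x,y',0)$, and parameters $(b_i,b_i')$ in $G$ with $R(b_i',b_j,0)$ iff $i<j$. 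Then condition (3) in the definition of $\cK$ makes $\{\varphi'(x;b_i,b_i'),\varphi(x;b_j,b_j')\}$ inconsistent exactly when $i<j$, while genericity and compactness give consistency of each set $\{\varphi(x;b_i,b_i')\mid i<n\}\cup\{\varphi'(x;b_j,b_j')\mid j\geq n\}$. Your sketch never pins down a formula, the parameter configuration, or the asymmetric ($i<j$) inconsistency mechanism, and the version you float ($x\in G$ with $R(x,u,0)\wedge\lnot R(x,v,0)$ along an $R(\cdot,\cdot,0)$-path) does not supply one. Finally, your point (c) has the hierarchy backwards: $\SOP_3$ implies $\SOP_2$, so there is no need (and no way) to arrange that the witnessing inconsistency is ``not already $\SOP_2$.''
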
 
\begin{proof}
Since $\Th(M')$ is the theory of an $\aleph_0$-categorical Fra\"{i}ss\'{e} limit with free amalgamation, it is $\NSOP_4$. (This is an unpublished result of R. Patel \cite{PaSOP4}; see also \cite[Section 4]{CoFA}.) Therefore its reduct $\TOG$ is also $\NSOP_4$. We will show that $\TOG$ is $\SOP_3$ using the ``two formula'' formulation (see, e.g., \cite[Claim 2.19]{Sh500}). Let $x$ be a variable of type $O$ and $y,y'$ variables of type $G$. Let $\varphi(x;y,y')$ be the formula $E(x,y,0)$, and let $\varphi'(x;y,y')$ be the formula $E(x,y',0)$. Let $(b_i,b_i')_{i< \omega}$ be a sequence such that $R(b_i',b_j,0)$ if and only if $i< j$. Then for all $n< \omega$, we have $\{\varphi(x;b_i,b_i')\mid i < n\}\cup \{\varphi'(x;b_j,b_j')\mid j\geq n\}$ is consistent by genericity of the Fra\"iss\'e limit and compactness, but for all $i < j$, $\{\varphi'(x;b_i,b_i'),\varphi(x;b_j,b_j')\}$ is inconsistent by condition (3) in the definition of $\cK$.  This establishes $\SOP_3$. 
\end{proof}

We now show that $\TOG$ exhibits the properties claimed at the start of this section.

\begin{claim}
Fix $a\in O$ and $b\in G$ such that $E(a,b,0)$ and $\lnot E(a,b,1)$. Then $a\trt{d}_\varnothing b$, but  $a\ntrt{d}_{\acl(\varnothing)} b$.
\end{claim}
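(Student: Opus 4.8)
The plan is to verify the two halves separately, mirroring the structure of the two earlier claims in the paper. For the failure $a\ntrt{d}_{\acl(\varnothing)} b$, recall from observation $(ii)$ that $\acl(\varnothing) = C = \{0,1\}$. So it suffices to exhibit a $\varnothing$-indiscernible sequence $(b_i)_{i<\omega}$ with $b_0 = b$ that is in fact $0,1$-indiscernible and such that $\tp(a/C b)$ already divides, witnessed by a formula over $Cb$. The natural choice is the formula $E(x,b,0)\wedge \lnot E(x,b,1)$, or even just, since we are now allowed to name $0$ and $1$, the edge-type data: take a sequence $(b_i)_{i<\omega}$ of elements of $G$ with $R(b_i,b_j,0)$ for all $i\neq j$ (a ``$0$-clique''), which exists by genericity of $M'$ and is $\varnothing$-indiscernible (and $C$-indiscernible). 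By condition $(3)$ in the definition of $\cK$, no single $o\in O$ can satisfy $E(o,b_i,0)$ for two indices $i$ joined by a $0$-edge; hence $\{E(x,b_i,0) : i<\omega\}$ is $2$-inconsistent, so the formula $E(x,b,0)$ divides over $C$, giving $a\ntrt{d}_{C} b$, i.e. $a\ntrt{d}_{\acl(\varnothing)} b$. (One should double check $a$ genuinely satisfies an instance, but $E(a,b,0)$ holds by hypothesis, and consistency of the $1$-element set is all that is needed for dividing of the formula; this is routine.)

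For the positive half $a\trt{d}_\varnothing b$, I would argue exactly as in the previous two claims: take an arbitrary $\varnothing$-indiscernible sequence $(b_i)_{i<\omega}$ with $b_0 = b$ and produce a single $a'$ with $a' b_i \equiv_\varnothing a b$ for all $i$. The key point, and the whole reason the example works, is that over $\varnothing$ the language $\cL$ cannot see which element of $C$ is ``$0$'': by observation $(iv)$, $\tp(a'b_i) = \tp(ab)$ iff there is an $\cL$-isomorphism $\acl(a'b_i)\to\acl(ab)$ matching the tuples, and such an isomorphism is allowed to swap the two elements of $C$. Since $(b_i)_{i<\omega}$ is $\varnothing$-indiscernible but need not be $C$-indiscernible, the edge-colour $R(b_i,b_j,\cdot)$ can only depend on the order type of $\{i,j\}$ up to the global swap of colours — concretely, for $i<j$ the pair $(b_i,b_j)$ is either always $0$-coloured, always $1$-coloured, or never $R$-related, and similarly each $E(a,b_i,\cdot)$-datum is constant along the sequence up to the colour swap being forced to be uniform. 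The construction of $a'$ then has (at most) a couple of cases analogous to Cases 1 and 2 in Section \ref{sec:GBF}: in the case where the sequence is ``genuinely $C$-indiscernible'' one builds $a'\in O$ with $E(a',b_i,0)$ for all $i$ (free amalgamation / genericity of $M$ lets us realize this isolated type provided no two relevant $b_i$'s are $0$-adjacent — and indiscernibility plus $b_0=b$ being a single vertex forces the cross-pairs to carry a fixed non-adjacent or $1$-adjacent colour, so condition $(3)$ is not violated), and in the opposite case one builds $a'$ with $E(a',b_i,1)$ for all $i$ and uses the colour-swapping isomorphism from $(iv)$ to conclude $a'b_i\equiv_\varnothing ab$.

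The main obstacle, and the step I would be most careful about, is checking that in every case the desired ``star'' type of $a'$ over $\{b_i : i<\omega\}$ is actually \emph{consistent} — i.e. that the configuration one wants $a'$ to witness lies in $\cK$, so that it is realized in $M$ by genericity and compactness. This is where condition $(3)$ (``$R(v,w,c)$ forbids a common $E(\cdot,v,c)$, $E(\cdot,w,c)$ neighbour'') interacts with the edge-colours along the indiscernible sequence, and it is exactly the tension that makes $\TOG$ be $\SOP_3$; the point is that $\varnothing$-indiscernibility (as opposed to $C$-indiscernibility) leaves just enough freedom — via the colour swap — to always pick a colour $c\in\{0,1\}$ in which the relevant vertices of the sequence form an $R(\cdot,\cdot,c)$-independent set, so that adding the common $E(\cdot,\cdot,c)$-neighbour $a'$ keeps us in $\cK$. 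Once consistency is secured in each case, invariance and $(iv)$ finish the argument, and combining with the first half we get $a\trt{d}_\varnothing b$ but $a\ntrt{d}_{\acl(\varnothing)} b$, as claimed.
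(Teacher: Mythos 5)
Your proposal is correct and follows essentially the same argument as the paper's: for $a\ntrt{d}_{\acl(\varnothing)}b$ you use a $0$-clique in $G$ together with condition $(3)$ of $\cK$ to get $2$-inconsistency of $\{E(x,b_i,0)\}$ (all elements of $G$ realize $\tp(b/C)$), and for $a\trt{d}_\varnothing b$ you use that a $\varnothing$-indiscernible sequence in $G$ carries at most one edge colour, choose the colour $c$ in which the sequence is $R(\cdot,\cdot,c)$-free, realize a common $E(\cdot,\cdot,c)$-neighbour $a'$, and conclude $a'b_i\equiv_\varnothing ab$ via observation $(iv)$ with either the identity or the colour-swapping isomorphism on $C$ --- exactly the paper's two cases. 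The one slip is your labelling of the case split by ``genuinely $C$-indiscernible'': a $C$-indiscernible $0$-clique through $b$ (the very sequence witnessing dividing over $C$) falls under that label yet cannot admit a common $E(\cdot,\cdot,0)$-neighbour and instead requires the colour-$1$/swap construction, but your later criterion of picking $c$ so that no two $b_i$'s are $R(\cdot,\cdot,c)$-related is the correct dichotomy and repairs this.
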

\begin{proof}
We first show $a\trt{d}_\varnothing b$. 
Let $(b_i)_{i< \omega}$ be an indiscernible sequence in $\text{tp}(b/\varnothing)$. 

Case 1: $M\models \lnot R(b_i,b_j,0)$ for all $i\neq j$. Then we can find some $a'\in O$ with $M\models E(a',b_i,0)\land \lnot E(a',b_i,1)$ for all $i< \omega$. The map $f_i\colon \acl(a'b_i)\to \acl(ab)$ which is the identity on $C$ and maps $a'$ to $a$ and $b_i$ to $b$ is an isomorphism of $\cL$-substructures of $M$, so $a'b_i\equiv ab$ for all $i<\omega$ by $(iv)$ above.

Case  2: Otherwise, by indiscernibility, $M\models R(b_i,b_j,0)$ for all $i\neq j$. Since $R(x,y,0)$ and $R(x,y,1)$ are disjoint relations, $M\models \lnot R(b_i,b_j,1)$ for all $i\neq j$. So we can find some $a'\in O$ with $M\models E(a',b_i,1)\land \lnot E(a',b_i,0)$ for all $i<\omega$. Now the map $f_i\colon \acl(a'b_i)\to \acl(ab)$ which swaps the elements of $C$ and maps $a'$ to $a$ and $b_i$ to $b$ is an isomorphism of $\cL$-substructures of $M$, so $a'b_i\equiv ab$ for all $i<\omega$ by $(iv)$ above.

It remains to show $a\ntrt{d}_{\acl(\varnothing)} b$. Note that $\acl(\varnothing) = C$, so $\text{tp}(a/\acl(\varnothing)b)$ contains the formula $E(x,b,0)$. It suffices to show that this formula divides over $C$. 

By $(iv)$ above, all elements of $G$ have the same type over $C$. Let $(b_i)_{i< \omega}$ be a sequence of elements of $G$ such that for all $i\neq j$, $R(b_i,b_j,0)$. Then the set $\{E(x,b_i,0)\mid i< \omega\}$ is $2$-inconsistent, which witnesses dividing.
\end{proof}

In the introduction we also claimed that all sets are extension bases in  $\TOG$. This will be deduced from our general analysis of free amalgamation theories in Section \ref{sec:FAT} (see Corollary \ref{cor:OGfe}). Therefore $\TOG$ is a second example demonstrating a negative answer to \cite[Question A.1]{Adgeo} (in addition to the example in Section \ref{sec:GBF}).

\section{Free Amalgamation Theories}\label{sec:FAT}

In this section, we show that if $T$ is a \emph{free amalgamation theory} (as defined in \cite{CoFA}) with disintegrated algebraic closure, then forking and dividing are the same for complete types over algebraically closed sets.  This proof is from an unpublished research note by the authors, written in May 2017. In the original note, we in fact claimed that this result holds for complete types over \emph{any} set. However, our argument was flawed because we used \cite[Remark 5.4(3)]{Adgeo} to reduce to algebraically closed sets. In fact, the stronger claim fails since the theory $\TOG$ from Section \ref{sec:OG} is a free amalgamation theory with disintegrated algebraic closure (see Proposition \ref{prop:OGFAT}).

Let $T$ be a complete first-order theory with monster model $\M$.  For convenience, and to follow the conventions of \cite{CoFA}, we will call a subset $A\subset \M$ \emph{closed} if $A=\acl(A)$.

\begin{definition}\label{def}
$T$ is a \textbf{free amalgamation theory} if there is an invariant ternary relation $\ind$ on small subsets of $\M$ satisfying monotonicity and the following axioms:
\begin{enumerate}[$(i)$]
\item \textit{(symmetry)} For all $A,B,C$, if $A\ind_C B$ then $B\ind_C A$.
\item \textit{(full transitivity)} For all $A$ and $D\seq C\seq B$, $A\ind_D B$ if and only if $A\ind_C B$ and $A\ind_D C$.
\item \textit{(full existence over closed sets)} For all $B,C\subset\M$ and tuples $a\in\M$, if $C$ is closed then there is $a'\equiv_C a$ such that $a'\ind_C B$.
\item \textit{(stationarity over closed sets)} For all closed $C\subset\M$ and closed tuples $a,a',b\in\M$, with $C\seq a\cap b$, if $a\ind_C b$, $a'\ind_C b$, and $a'\equiv_C a$, then $ab\equiv_C a'b$.
\item \textit{(freedom)} For all $A,B,C,D$, if $A\ind_C B$ and $C\cap AB\seq D\seq C$, then $A\ind_D B$.
\item \textit{(closure)} For all closed $A,B,C$, if $C\seq A\cap B$ and $A\ind_C B$ then $AB$ is closed.
\end{enumerate}
\end{definition}

We will ultimately focus on the case when $T$ has \emph{disintegrated algebraic closure}, which is to say that the algebraic closure of any set $A\subset\M$ is the union of the algebraic closures of singleton elements in $A$. This is equivalent to the property that $AB$ is closed for any closed $A,B\subset\M$.  

\begin{example}\label{ex}
The following are examples of free amalgamation theories with disintegrated algebraic closure.
\begin{enumerate}[$(1)$]
\item Let $\cL$ be a finite relational language and let $\cK$ be a \Fraisse\ class of finite $\cL$-structures with free amalgamation. Let $T$ be the complete theory of the \Fraisse\ limit of $\cK$. Then $T$ is a free amalgamation theory, and $\acl(A)=A$ for any $A\subset\M$.

\item Let $\cL$ be the language of graphs and fix $n\geq 3$. There is a unique (up to isomorphism) countable, universal, and existentially complete $(K_n+K_3)$-free graph (where $K_n+K_3$ denotes the free amalgamation of $K_n$ and $K_3$ over a single vertex). If $T$ is the complete theory of this graph, then $T$ is a free amalgamation theory with disintegrated algebraic closure. 
\end{enumerate}
In both cases, the desired ternary relation $\ind$ is free amalgamation of relational structures: given $A,B,C\subset\M$, $A\ind_C B$ if and only if $ABC$ is the free amalgam of $AC$ and $BC$ over $C$. The verification of the axioms of Definition \ref{def} for these examples is sketched in \cite{CoFA}. In the first case, all axioms are immediate from classical \Fraisse\ theory (see, e.g., \cite{Hobook}). For the second case, the axioms rely on work of Cherlin, Shelah, and Shi \cite{CSS}, and Patel \cite{PaSOP4}. 
\end{example}

Recall  our goal is to show that if $T$ is a free amalgamation theory with disintegrated algebraic closure, then forking and dividing are the same for complete types over algebraically closed sets. Our proof strategy will follow that of \cite{Co13}, where this result is shown for the special case that $T$ is the theory of the generic $K_n$-free graph.  In particular, we will first prove a ``mixed transitivity" lemma involving dividing independence and the ternary relation $\ind$ in Definition \ref{def}. This general strategy is also used in \cite{CoKr}, \cite{dElbACFG}, and \cite{KrRa}, where the equivalence of forking and dividing for complete types (over algebraically closed sets) is established via a mixed transitivity lemma involving a stationary independence relation.

\begin{lemma}\label{lem:FAT}
Let $T$ be a free amalgamation theory with disintegrated algebraic closure. Suppose $A,B,C,D\subset\M$ are such that $D\seq C\seq B$ and $C,D$ are closed. Then
\[
\textstyle A\trt{d}_D C\mand \acl(AC)\ind_C B\mimp A\trt{d}_D B.
\]
\end{lemma}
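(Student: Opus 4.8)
The plan is to verify dividing directly via the characterization in \cite[Corollary 7.1.5]{TZ}: we must show that for any $D$-indiscernible sequence $(b_i)_{i<\omega}$ with $b_0$ enumerating $B$, there is some $a'\equiv_{BD} a$ (where $a$ enumerates $A$) with $a'b_i\equiv_D ab_0$ for all $i$. First I would reduce to the case where everything in sight is closed: replace $A$ by $\acl(AC)$ (legitimate since $\acl(AC)\ind_C B$ is the hypothesis, and $A\trt{d}_D C$ implies $\acl(AC)\trt{d}_D C = \acl(AD C)$ by Proposition \ref{prop:PAC-AE-div}, noting $C$ closed and $D\seq C$), so set $A^* = \acl(AC)$, which is closed and contains $C$. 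Note $A^* \trt{d}_D C$ and $A^*\ind_C B$; proving $A^*\trt{d}_D B$ suffices since $A\seq A^*$ and $\trt{d}$ has monotonicity.

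Next, given the $D$-indiscernible sequence $(b_i)_{i<\omega}$ enumerating copies of $B$, I would cut it down to its subsequence enumerating the copies $C_i$ of $C$ (the coordinates of $b_i$ lying in $C$, which form a $D$-indiscernible sequence of closed sets $C_i\equiv_D C$ with $C_0 = C$). Using $A^*\trt{d}_D C$, extract $a^{**}$ with $a^{**}C_i \equiv_D a^* C$ for all $i$; by $(i)$ of Lemma \ref{lem:PAC-AE-test} for $\trt{d}$ we may even assume (after applying an automorphism over $C_0\cdots$, or by working with $\acl(a^{**}D)$) that $a^{**} = a^*$ after renaming, i.e. there is a single set $A'\equiv_D A^*$, closed, with $A'C_i\equiv_D A^* C$ for every $i$, and $A'\trt{d}_D C$. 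Now for each $i$, use full existence over the closed set $C_i$ (axiom $(iii)$ of Definition \ref{def}) to build, step by step along $i<\omega$, images $A'_i \equiv_{C_i} A'$ with $A'_i \ind_{C_i} (b_{<i} \cup b_{>i} \cup$ everything built so far$)$ — more precisely, I would build a single $A''$ with $A''\equiv_{C_i} A'$ and $A'' \ind_{C_i} b_i$ for all $i$ simultaneously, using a compactness/finite-approximation argument together with the fact (from $A^*\ind_C B$ and symmetry, freedom, and homogeneity of the indiscernible sequence) that such local pictures are realizable. Then stationarity over closed sets (axiom $(iv)$) forces $A''b_i \equiv_{C_i} A^* b_0$: indeed $A^*\ind_C B$ and $A''\ind_{C_i}b_i$ with $A''\equiv_{C_i}A^*\equiv_{C_i} A^*$ (transferring the type of $A^*$ over $C$ to $C_i$ via indiscernibility), and all three of $A''$, $A^*$, $b_i$, $B$ closed, so stationarity gives $A''b_i\equiv_{C_i} A^*B$, hence $\equiv_D A^*B$. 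Finally $A''\equiv_D A^*$ and $A''b_0 \equiv_D A^* b_0$, so after one more automorphism we get $a'\equiv_{BD}a$ witnessing $A^*\trt{d}_D B$, as required.

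The main obstacle I expect is the simultaneous extension step: arranging a single $A''$ with $A''\equiv_{C_i} A^*$ \emph{and} $A''\ind_{C_i} b_i$ for every $i<\omega$ at once, rather than just for one $i$. Full existence (axiom $(iii)$) only gives this one base at a time, and the bases $C_i$ are all different. The resolution should come from full transitivity (axiom $(ii)$) together with freedom (axiom $(v)$): build $A''$ recursively so that at stage $n$, $A''_{\leq n}\ind_{C_0}$ (all the $b_i$, $i\leq n$), using that the $b_i$ are ``freely placed'' relative to each other over $C_0\cdots C_n$ by indiscernibility plus the structure of free amalgamation, and that $C\seq C_i$ chains compatibly; then full transitivity lets one descend the base from $C_0$ to each $C_i$ while preserving the independence $\ind_{C_i} b_i$. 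One must also check the base-chain hypotheses of full transitivity apply, which uses $D\seq C = C_0$ and each $C_i$ closed. A cleaner alternative, mirroring \cite{Co13}, is to invoke the mixed-transitivity packaging already developed there for the $K_n$-free graph; but since that is the very lemma being proved, I would instead carry out the recursion explicitly, being careful that closure (axiom $(vi)$) keeps all the constructed sets algebraically closed so that stationarity $(iv)$ remains applicable at the final step.
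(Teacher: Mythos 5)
Your opening reduction is where the argument breaks, and it cannot be repaired. You claim that $A\trt{d}_D C$ implies $\acl(AC)\trt{d}_D C$, citing Proposition \ref{prop:PAC-AE-div}; but axiom $(i)$ of Lemma \ref{lem:PAC-AE-test} only yields $\acl(AD)\trt{d}_D C$. Moving the right-hand side $C$ (and its algebraic closure with $A$) to the left over the smaller base $D$ is a different matter, and it is false in general: since $C\seq\acl(AC)$, the type of an enumeration of $\acl(AC)$ over $C$ pins down the copy of $C$ coordinatewise, so whenever $C\not\seq D$ one can build a nonconstant $D$-indiscernible sequence of copies of $C$ (moving some $c_0\in C\setminus D=C\setminus\acl(D)$) witnessing $\acl(AC)\ntrt{d}_D C$. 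For the same reason your new target $\acl(AC)\trt{d}_D B$ is generally false under the hypotheses of the lemma, so proving it is hopeless; the conclusion has to be kept as $A\trt{d}_D B$ with $A$ itself on the left, which is how the paper proceeds. (This is, somewhat ironically, the same genre of error --- freely absorbing $\acl$ of base-plus-parameters into a dividing statement --- that the paper is devoted to refuting.)

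The step you flag as the main obstacle is indeed the crux, and your proposed resolution does not go through: the terms of an arbitrary $D$-indiscernible sequence $(b^n)$ need not be ``freely placed relative to each other'' in any sense, so the recursion via full transitivity and freedom has nothing to run on. The paper's proof avoids any simultaneous or recursive independence among the $b^n$: after extracting $a_*$ with $a_*c^n\equiv_D ac$ (keeping $a$, not $\acl(AC)$, on the left of $\trt{d}$), it applies full existence \emph{once}, over the single closed base $C_*=\acl(c^{<\omega})$, against the whole sequence $b^{<\omega}$, getting $\acl(a'C_*)\ind_{C_*}b^{<\omega}$; it then lowers the base from $C_*$ to each individual $c^n$ using the freedom axiom. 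That lowering requires the key claim $C_*\cap\acl(a'c^n)b^n=c^n$, and this is precisely where the hypothesis $A\trt{d}_D C$ enters (via $\trt{d}\Rightarrow\trt{a}$, giving $\acl(a)\cap c\seq D$), together with disintegrated algebraic closure and a preliminary analysis of which coordinates of the indiscernible sequence are constant (the paper first replaces $D$ by the closed set of constant coordinates inside $C$). Your endgame via stationarity over the conjugate copies of $C$ is in the right spirit --- the paper performs it over $c$ after pulling $b^n$ back to $b$ by an automorphism over $D$ --- but without dropping the false reduction and without the full-existence-over-$C_*$ plus freedom-plus-claim mechanism, the proof does not close.
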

\begin{proof}
Assume  $A\trt{d}_D C$ and $\acl(AC)\ind_C B$. Enumerate $B=b=(b_i)_{i\in I}$. Assume $I_0\seq J$ are initial segments of $I$ such that $D=(b_i)_{i\in I_0}$ and $C=c=(b_i)_{i\in J}$. Let $(b^n)_{n<\omega}$ be a $D$-indiscernible sequence, with $b^0=b$. Let $a$ enumerate $A$. We want to find $a'$ such that $a'b^n\equiv_Dab$ for all $n<\omega$. 

For $n<\omega$, let $c^n=(b^n_i)_{i\in J}$, and note that $(c^n)_{n<\omega}$ is $D$-indiscernible with $c^0=c$. There is some $I_1$ such that $I_0\seq I_1\seq J$ and, for all $m< n<\omega$ and $i\in J$, $b^m_i=b^n_i$ if and only if $i\in I_1$. Set $D'=(b_i)_{i\in I_1}$, so $D\seq D'\seq C$, and by base monotonicity for $\trt{d}$, we have $A\trt{d}_{D'} C$. Note also that $(b^n)_{n<\omega}$ and $(c^n)_{n<\omega}$ are each $D'$-indiscernible. We claim $D'$ is closed so that we may assume without loss of generality that $I_1=I_0$ and $D'=D$. 

To show $D'$ is closed, fix $d\in \acl(D')$. Since $C$ is closed, $d = b_{j_0}$ for some $j_0\in J$. Let $j_0<\dots<j_{k-1}$ be the indices of the finitely many conjugates of $b_{j_0}$ over $D'$. For every $n\in \omega$, there is some permutation $\sigma_n\in S_k$ such that $b^n_{j_i} = b_{j_{\sigma_n(i)}}$ for all $i<k$. By Pigeonhole, there exist $n<m$ such that $\sigma_n = \sigma_m$. Then $b^n_{j_i} = b^m_{j_i}$ for all $i<k$, so by indiscernibility $b^{n'}_{j_i} = b^n_{j_i}$ for all $n<n'$, and hence $j_i\in I_1$ for all $i<k$. In particular, $d = b_{j_0}\in D'$. 

Since $A\trt{d}_D C$, there is $a_*$ such that $a_*c^n\equiv_D ac$ for all $n<\omega$. Set $C_*=\acl(c^{<\omega})$. Let $e$ enumerate $\acl(a_*C_*)$. By full existence for $\ind$, there is $e'\equiv_{C_*} e$ such that $e'\ind_{C_*} b^{<\omega}$. Note that there is some $a'\equiv_{C_*}a_*$ such that $e'$ enumerates $\acl(a'C_*)$. So we have  $\acl(a'C_*)\ind_{C_*}b^{<\omega}$. For each $n<\omega$, we have $a'c^n\equiv_Da_*c^n\equiv_Dac$. By monotonicity, $\acl(a'c^n)\ind_{C_*}b^n$ for all $n<\omega$.
\medskip

\noindent\emph{Claim}: For any $n<\omega$, $C^*\cap\acl(a'c^n)b^n=c^n$.

\noindent\emph{Proof}: Since algebraic closure in $T$ is disintegrated, we have $\acl(a'c^n)=\acl(a')c^n$ and $C^*=c^{<\omega}$. So it suffices to show $c^{<\omega}\cap \acl(a')b^n=c^n$. Fix some $x\in c^{<\omega}\cap \acl(a')b^n$. There is $m<\omega$ and $i\in J$ such that $x=b^m_i$. Suppose first that $x\in \acl(a')$. Then $b^m_i\in \acl(a')\cap c^m$, which means $b_i\in \acl(a)\cap c$. Since $A\trt{d}_D C$, we have $\acl(a)\cap c\seq D$, and so $i\in I_0$. Thus $b^m_i=b^n_i\in c^n$. Finally, suppose $x\in b^n$. There is $j\in I$ such that $b^m_i=b^n_j$. It follows that $b^m_i=b^n_i$ (if $m=n$ this is trivial, and if $m\neq n$ use $b^m_i=b^n_j$ and indiscernibility). So $x=b^n_i\in c^n$.\hfill $\dashv_{\text{claim}}$
\medskip

To finish the proof, we show $a'b^n\equiv_D ab$ for all $n<\omega$. So fix $n<\omega$, and let $\sigma\in\Aut(\M/D)$ be such that $\sigma(b^n)=b$ (note that $\sigma(c^n)=c$). By the claim and freedom, we have $\acl(a'c^n)\ind_{c^n}b^n$. So $\acl(\sigma(a')c)\ind_c b$ by invariance, and since $\sigma(\acl(a'c^n))=\acl(\sigma(a')c)$. Also, we have $\sigma(a')c\equiv_D a'c^n\equiv_D ac$, and so $\sigma(a')c\equiv_c ac$. Therefore $\acl(\sigma(a')c)\equiv_c \acl(ac)$.  So we may fix tuples $e$ and $e'$ such that $\acl(ac)=ace$, $\acl(\sigma(a')c)=\sigma(a')ce'$, and $ace\equiv_c \sigma(a')ce'$. We have $\sigma(a')ce'\ind_c b$ and, by assumption, $ace\ind_c b$. Since $c\seq ace\cap b$, we may apply stationarity to conclude $aceb\equiv_c \sigma(a')ce' b$. In particular, $a'b^n\equiv_D \sigma(a')b\equiv_D ab$. 
\end{proof}

\begin{remark}
Suppose $T$ is a free amalgamation theory, witnessed by $\ind$. Given closed $A,B,C\subset\M$, with $C\seq A\cap B$, if $A\ind_C B$ then $A\trt{d}_C B$ by \cite[Lemma 7.15]{CoFA}. Therefore Lemma \ref{lem:FAT} can be seen as a weakening of transitivity for $\trt{d}$. It is worth noting that many examples of such theories are not simple (e.g. the theory of the generic $K_n$-free graph for $n\geq 3$), and hence $\trt{d}$ need not be transitive. 
\end{remark}

We now prove the main result.

\begin{theorem}\label{thm:FAT}
Let $T$ be a free amalgamation theory with disintegrated algebraic closure. Then for any $A,B,C\subset\M$, 
\[
\textstyle A\trt{f}_C B \text{ if and only if } A\trt{d}_C \acl(BC).
\]
\end{theorem}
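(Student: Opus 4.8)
The goal is to prove that in a free amalgamation theory with disintegrated algebraic closure, $A\trt{f}_C B$ if and only if $A\trt{d}_C\acl(BC)$; equivalently, by the definition of $\trt{da}$, that $\trt{f}=\trt{da}$. The plan is to verify the two implications separately, exploiting Lemma \ref{lem:FAT} (mixed transitivity) as the main engine together with the basic properties of $\trt{f}$ and $\trt{d}$ recalled in Section \ref{sec:prelim}.

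The forward direction $A\trt{f}_C B\Rightarrow A\trt{d}_C\acl(BC)$ is the easy one: $\trt{f}$ satisfies extension, hence algebraic extension by Remark \ref{rem:ext}, so $A\trt{f}_C\acl(BC)$, and since $\trt{f}\Rightarrow\trt{d}$ always holds this gives $A\trt{d}_C\acl(BC)$. Actually one can bypass even this and just say: $A\trt{f}_C B$ implies $A\trt{f}_C\acl(BC)$ by algebraic extension for $\trt{f}$, and $\trt{f}_C\Rightarrow\trt{d}_C$. For the reverse direction I want to show $A\trt{d}_C\acl(BC)\Rightarrow A\trt{f}_C B$. Since $\trt{f}$ is obtained from $\trt{d}$ by forcing extension, it suffices to show: whenever $A\trt{d}_C\acl(BC)$ and $D\supseteq B$, there is $A'\equiv_{BC}A$ with $A'\trt{d}_C D$. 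Moreover, using Proposition \ref{prop:PAC-AE-div} ($\trt{d}$ satisfies axioms $(i)$ and $(ii)$), I can reduce to the case where $C$ is closed and $B$ is closed with $C\subseteq B$: replace $C$ by $\acl(C)$ and enlarge $B$ to $\acl(BC)$, checking these moves don't change the statement. So after reductions the hypothesis becomes $A\trt{d}_C B$ with $C\subseteq B$ both closed (here $B$ playing the role of ``$\acl(BC)$'').

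The heart of the argument: given closed $C\subseteq B$ with $A\trt{d}_C B$, and given $D\supseteq B$, produce $A'\equiv_B A$ with $A'\trt{d}_C D$. Without loss of generality $D=\acl(D)\supseteq B$. Using full existence over closed sets for $\ind$ applied to $\acl(AB)$: there is $A'$ with $\acl(A'B)\equiv_B\acl(AB)$ (so in particular $A'\equiv_B A$) and $\acl(A'B)\ind_B D$. Now I want to invoke Lemma \ref{lem:FAT} with base $C$, middle $B$, and top $D$: the hypotheses are $A'\trt{d}_C B$ (which holds since $A'\equiv_B A$ and $A\trt{d}_C B$, using invariance — or rather since $\trt{d}_C B$ only depends on $\tp(A'/B)$) and $\acl(A'B)\ind_B D$, and $C,B$ closed with $C\subseteq B\subseteq D$; the conclusion is $A'\trt{d}_C D$. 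That is exactly what I need. Assembling: $A'\equiv_B A$ and $A'\trt{d}_C D$, so $A\trt{d}_C D$ holds after moving by an automorphism over $B$ — wait, I need $A'\equiv_{B} A$ which I have, and then $A'$ witnesses the extension axiom instance. Since $D\supseteq B$ was arbitrary, $A\trt{f}_C B$ follows from the characterization of forking via ``forcing extension on dividing.''

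The step I expect to be the main obstacle is the bookkeeping in the reduction to closed $B$ and $C$: I must carefully check, using Proposition \ref{prop:PAC-AE-div} and base/monotonicity for $\trt{d}$ and Proposition \ref{prop:PACfork} for $\trt{f}$, that $A\trt{d}_C\acl(BC)$ is equivalent to $\acl(AC)\trt{d}_{\acl(C)}\acl(BC)$ and that $A\trt{f}_C B$ is equivalent to $\acl(AC)\trt{f}_{\acl(C)}\acl(BC)$, so that proving the theorem for closed data suffices. A secondary subtlety is making sure the instance of Lemma \ref{lem:FAT} is applied with the hypothesis $\acl(A'C)\ind_{C}B$ in the form the lemma demands — but since $C\subseteq B$ are both closed and $\acl(A'B)\ind_B D$ combined with full existence can be arranged to also give $\acl(A'C)\ind_C B$ (or one re-chooses $A'$ in two stages, first making $\acl(A'C)\ind_C B$ and then extending), this is routine once the order of quantifiers is set up correctly. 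I would present the two-stage choice of $A'$ explicitly to avoid any gap: first use full existence over the closed set $C$ to get $A_1\equiv_C A$ with $\acl(A_1C)\ind_C B$ — but I also need $A_1\equiv_B A$, so actually the cleaner route is to directly invoke that $A\trt{d}_C B$ together with stationarity-type uniqueness lets me assume $\acl(AC)\ind_C B$ from the start when $B$ is closed; I will sort out whichever of these is cleanest in the write-up, but the skeleton above is the intended proof.
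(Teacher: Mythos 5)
Your proposal is correct and follows essentially the same route as the paper: reduce to closed $B,C$ with $C\subseteq B$ via Propositions \ref{prop:PACfork} and \ref{prop:PAC-AE-div} (plus base monotonicity for $\trt{d}$), then for $\hat{B}\supseteq B$ use full existence over the closed set $B$ to get $A'\equiv_B A$ with $\acl(A'B)\ind_B\hat{B}$, and conclude $A'\trt{d}_C\hat{B}$ by Lemma \ref{lem:FAT}. Your closing worry is unnecessary: in the instantiation of Lemma \ref{lem:FAT} with base $C$, middle $B$, top $\hat{B}$, the only hypotheses are $A'\trt{d}_C B$ and $\acl(A'B)\ind_B\hat{B}$, both of which you already have, so no condition of the form $\acl(A'C)\ind_C B$ is needed.
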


\begin{proof}
First recall that in any theory we have $A\trt{f}_C B\Rightarrow A\trt{d}_C \acl(BC)$. So conversely, suppose $A\trt{d}_C \acl(BC)$. We want to show $A\trt{f}_C B$. By Proposition \ref{prop:PACfork}, it suffices to show $A\trt{f}_{\acl(C)}\acl(BC)$. We have $A\trt{d}_{\acl(C)} \acl(BC)$ by base monotonicity for $\trt{d}$. So, altogether, we may assume without loss of generality that $B,C$ are closed and $C\seq B$.  To show $A\trt{f}_C B$, it suffices to fix $\hat{B}\supseteq B$ and find $A'\equiv_{B}A$ such that $A'\trt{d}_C \hat{B}$. 

 Let $\ind$ witness that $T$ is a free amalgamation theory. By full existence there is $A'$ such that $A'\equiv_B A$ and $\acl(A'B)\ind_B \hat{B}$. By invariance of $\trt{d}$, we have $A'\trt{d}_C B$. By Lemma \ref{lem:FAT}, $A'\trt{d}_C \hat{B}$, as desired. 
\end{proof}

\begin{remark}
Given $n\geq 3$, let $T_n$ be the theory of the generic $K_n$-free graph. In \cite{Co13}, $\trt{d}$ is characterized for $T_n$ by purely combinatorial properties of graphs. It would be interesting to give similar descriptions of $\trt{d}$ for other theories listed in Example \ref{ex}. 
\end{remark}

\begin{question}
Does Theorem \ref{thm:FAT} hold without the assumption of disintegrated algebraic closure, or under the weaker assumption that algebraic closure is modular?
\end{question}

A possible lead toward the previous question could be recent work of Mutchnik \cite{Mutch2} which, among other things, studies a generalization of the class of free amalgamation theories.

Theorem \ref{thm:FAT} and Remark \ref{rem:acl-exist} together yield the following conclusion.

\begin{corollary}\label{cor:extbaseFAT}
If $T$ is a free amalgamation theory with disintegrated algebraic closure then all sets are extension bases for nonforking.
\end{corollary}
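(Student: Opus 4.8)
The statement is an immediate consequence of Theorem \ref{thm:FAT} together with Remark \ref{rem:acl-exist}, so the plan is simply to chain these two facts. Recall that $C\subset\M$ is an extension base for nonforking precisely when $A\trt{f}_C C$ holds for every $A\subset\M$ (equivalently, $\trt{f}$ satisfies existence). So fix an arbitrary $A\subset\M$ and an arbitrary $C\subset\M$; the goal is to verify $A\trt{f}_C C$.

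First I would apply Theorem \ref{thm:FAT} with ``$B$'' instantiated as $C$. Since $\acl(CC)=\acl(C)$, the theorem gives the equivalence
\[
A\trt{f}_C C \quad\Longleftrightarrow\quad A\trt{d}_C \acl(C).
\]
Then I would invoke Remark \ref{rem:acl-exist}, which records that $A\trt{d}_C \acl(C)$ holds for all $A$ and $C$ in any theory (because any $C$-indiscernible sequence is automatically $\acl(C)$-indiscernible, so no formula in $\tp(a/\acl(C)C)=\tp(a/\acl(C))$ can divide over $C$). Combining the displayed equivalence with this observation yields $A\trt{f}_C C$, as needed. Since $A$ and $C$ were arbitrary, every set is an extension base for nonforking.

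There is essentially no obstacle at this stage: all the genuine content — in particular the mixed transitivity of Lemma \ref{lem:FAT} and the resulting characterization $A\trt{f}_C B\Leftrightarrow A\trt{d}_C\acl(BC)$ — has already been established in Theorem \ref{thm:FAT}, and Remark \ref{rem:acl-exist} is elementary. The one point worth stating explicitly for the reader is just the reduction at the start, namely that ``$C$ is an extension base for nonforking'' unwinds to the instance $B=C$ of the theorem.
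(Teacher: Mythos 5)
Your proposal is correct and matches the paper's argument exactly: the paper likewise deduces the corollary by combining Theorem \ref{thm:FAT} (instantiated with $B=C$, so the right-hand side becomes $A\trt{d}_C\acl(C)$) with Remark \ref{rem:acl-exist}. Nothing further is needed.
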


Finally we revisit the theory $\TOG$ from Section \ref{sec:OG}.

\begin{proposition}\label{prop:OGFAT}
$\TOG$ is a free amalgamation theory with disintegrated algebraic closure.
\end{proposition}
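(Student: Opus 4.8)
The plan is to exhibit an explicit invariant ternary relation $\ind$ on small subsets of $\M$ — namely, free amalgamation of $\cL$-structures after closing under $\acl$ — and verify the six axioms of Definition \ref{def}, plus monotonicity. Concretely, for $A,B,C\subset\M$, declare $A\ind_C B$ iff $\acl(AC)$ and $\acl(BC)$ are freely amalgamated over $\acl(C)$ inside $M$; since $\acl(X)=X\cup C$ (the two constants) for every $X$, this amounts to saying that $O$-parts and $G$-parts of $A$ and $B$ meet only inside $C$, and that no relation ($R$ or $E$) holds with arguments straddling $A\setminus C$ and $B\setminus C$. The point of working inside the Fra\"iss\'e limit $M'$ of $\cK$ is that $\cK$ has free amalgamation, so this relation is well-defined and total on $M$; and by observation $(iv)$ of Section \ref{sec:OG}, types in $\TOG$ are controlled by $\cL$-isomorphisms between algebraic closures (up to the $C$-swap $\sigma$), which is exactly what one needs to read off stationarity.

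The verification then proceeds axiom by axiom. Monotonicity, symmetry, and freedom are immediate from the combinatorial definition of the free amalgam (no relations straddle, so passing to substructures or shrinking the base preserves this). Full transitivity is the standard fact that a three-way configuration $D\subseteq C\subseteq B$ is a free amalgam over $D$ iff it is free over $C$ and $C$ is free from $B$ over $D$ — here one must be a little careful to carry the $\acl$'s along, but since $\acl$ just adjoins the two constants and constants cause no relations, this is routine. Closure (and the disintegratedness claim) is handled by observation $(ii)$: $\acl(X)=X\cup C$ always, so $AB$ is automatically closed whenever $A,B$ are, regardless of $\ind$; in particular $T_{\mathsf{OG}}$ has disintegrated algebraic closure. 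Full existence over closed sets follows from disjoint amalgamation in $\cK$ together with the extension property of the Fra\"iss\'e limit: given $a$ and closed $C$ and any $B$, realize inside $M'$ the free amalgam of $\acl(aC)$ with $\acl(BC)$ over $C$ (adding the $C$-swap as in $(iv)$ if necessary) to get $a'\equiv_C a$ with $a'\ind_C B$.

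The main obstacle will be \emph{stationarity over closed sets}: given closed tuples $a,a',b$ with $C\subseteq a\cap b$, $a\ind_C b$, $a'\ind_C b$, and $a\equiv_C a'$, show $ab\equiv_C a'b$. The strategy is to fix an $\cL$-isomorphism $g\colon a\to a'$ over $C$ (which exists by $(iv)$, possibly after composing with $\sigma$ — and here one must check that the $C$-swap does not obstruct things, using that $a\equiv_C a'$ and $C\supseteq\{0,1\}$ so $\sigma$ fixes $C$), and extend it by the identity on $b$ to a map $ab\to a'b$. Because both $ab$ and $a'b$ are free amalgams over $C$ (i.e. over the same base, with no straddling relations and overlaps only in $C$), this combined map is a well-defined $\cL$-isomorphism of the substructures $\langle ab\rangle$ and $\langle a'b\rangle$; since these are already closed (disintegrated $\acl$), observation $(iv)$ gives $ab\equiv_C a'b$. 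The subtlety to get right is bookkeeping around the sort $C$ and the automorphism $\sigma$ — ensuring that whatever $C$-swap is needed to match $a$ with $a'$ is compatible with fixing $b$ pointwise, which works precisely because $C$ (containing both constants) is fixed setwise by $\sigma$ and pointwise by any $C$-isomorphism in play. Once stationarity is in hand, all axioms of Definition \ref{def} are verified and the proposition follows.
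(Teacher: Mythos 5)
Your proposal is correct and follows essentially the same route as the paper: take free amalgamation of relational structures as the witnessing relation, get the standard axioms (invariance, monotonicity, symmetry, full transitivity, freedom) by the usual arguments, deduce full existence from free amalgamation in $\cK$, closure and disintegratedness from $\acl(A)=A\cup\{0,1\}$, and stationarity from observation $(iv)$ (the ``almost quantifier elimination''). Your worry about the swap $\sigma$ resolves exactly as you note — closed base sets contain both constants, so it never intervenes — and the paper's proof is simply a terser version of the same verification.
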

\begin{proof}
If $A\subset\M$ then $\acl(A)=A\cup\{0,1\}$. Therefore algebraic closure is disintegrated. Let $\ind$ denote the usual free amalgamation of relational structures. Then we have invariance, monotonicity, symmetry, full transitivity, and freedom by the standard proofs (see the discussion in~\cite[Example 3.2]{CoFA}). Full existence over closed sets follows from free amalgamation in the class $\cK$, and stationarity over closed sets follows from the description of algebraic closure and ``almost quantifier elimination". Finally, the closure axiom is trivial since algebraic closure is disintegrated. 
\end{proof}

\begin{corollary}\label{cor:OGfe}
In $\TOG$, $\trt{f}$ satisfies full existence, but $\trt{f}\neq\trt{d}$.
\end{corollary}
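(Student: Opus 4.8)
\textbf{Proof plan for Corollary \ref{cor:OGfe}.}
The statement packages together two facts about $\TOG$. The second, that $\trt{f}\neq\trt{d}$, is already in hand: the final Claim of Section \ref{sec:OG} produced $a\in O$ and $b\in G$ with $a\trt{d}_\varnothing b$ but $a\ntrt{d}_{\acl(\varnothing)}b$, so $\trt{d}$ fails the axiom $(\dagger)$ of Remark \ref{rem:add-to-base}; since $\trt{f}$ has base monotonicity (indeed $A\trt{f}_\varnothing b$ would give $A\trt{f}_{\acl(\varnothing)}b$ and hence $A\trt{d}_{\acl(\varnothing)}b$), the relations $\trt{f}$ and $\trt{d}$ must differ. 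Alternatively, one can invoke the general principle recorded in the introduction and Remark \ref{rem:ext}: if $\trt{d}$ fails algebraic extension then $\trt{f}\neq\trt{d}$, because $\trt{f}$ always has algebraic extension while $\trt{d}$ does not here.

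For the first part, that $\trt{f}$ satisfies full existence, the plan is simply to apply Proposition \ref{prop:OGFAT} together with Corollary \ref{cor:extbaseFAT}. Proposition \ref{prop:OGFAT} establishes that $\TOG$ is a free amalgamation theory with disintegrated algebraic closure, and Corollary \ref{cor:extbaseFAT} then says that in any such theory all sets are extension bases for nonforking, i.e.\ $\trt{f}$ satisfies existence. As noted in the remark following Definition \ref{def:relations}, existence and full existence are equivalent for $\trt{f}$, so full existence follows. (This is also the promised verification of the claim made at the end of Section \ref{sec:OG} and referenced as Corollary \ref{cor:OGfe} there.)

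There is essentially no obstacle here; the corollary is a bookkeeping consequence of results already proved. The only point that deserves a sentence of care is making sure the two halves are not in tension: $\trt{f}$ satisfying full existence is exactly the statement that all sets are extension bases for nonforking, and this does \emph{not} force $\trt{f}=\trt{d}$ — Adler's Question A.1 asked whether it does, and $\TOG$ (like the example of Section \ref{sec:GBF}) shows it does not. So the corollary should be written to emphasize that $\TOG$ witnesses the failure of that implication, with the $\trt{f}\neq\trt{d}$ half justified by the failure of $(\dagger)$ for $\trt{d}$ from the last Claim of Section \ref{sec:OG}.
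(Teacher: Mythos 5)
Your proposal is correct and follows essentially the same route the paper intends: Proposition \ref{prop:OGFAT} plus Corollary \ref{cor:extbaseFAT} give existence for $\trt{f}$, which is equivalent to full existence by the remark after Definition \ref{def:relations}, and the Claim of Section \ref{sec:OG} gives $\trt{f}\neq\trt{d}$. One small slip: the parenthetical deriving $a\trt{f}_{\acl(\varnothing)}b$ from $a\trt{f}_\varnothing b$ is not an instance of base monotonicity, since $\acl(\varnothing)$ (the two elements of sort $C$) is not contained in $\{b\}$; the step needs Proposition \ref{prop:PACfork} (or algebraic extension for $\trt{f}$ from Remark \ref{rem:ext}, followed by base monotonicity and monotonicity) --- but your alternative argument via Remark \ref{rem:ext}, that $\trt{f}$ has algebraic extension while $\trt{d}$ fails it here, is exactly right and suffices.
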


\section{Corrections to literature}\label{sec:corrections}

The purpose of this section is to collect and discuss some arguments in the literature which rely on the incorrect statements made in \cite[Remark 5.4]{Adgeo}. We do not claim that this list is exhaustive.

\subsection{The model completion of the empty theory}\label{sec:KrRam}

    Let $\cL$ be an arbitrary language and let $T^\emptyset_{\cL}$ be the model completion of the empty $\cL$-theory. In \cite[Proposition 3.17]{KrRa}, it is shown that $\trt{d}$ coincides with $\trt{M}$ in (any completion of) $T^\emptyset_{\cL}$. The proof quotes \cite[Remark 5.4(4)]{Adgeo} for $\trt{d}\Rightarrow\trt{M}$, which is valid in this case since algebraic closure and definable closure coincide  (see \cite[Corollary 3.11]{KrRa}). 

\subsection{Simple free amalgamation theories}

In \cite{CoFA}, preservation of algebraic closure for $\trt{d}$ is stated as Fact 7.4 and used in the proof that a free amalgamation theory is simple if and only if it is $\NTP_2$, as well as the corresponding analysis of forking independence in this case. However, it turns out that the full power of Fact 7.4 is only  required in the context of simplicity, and the remaining applications actually use weaker statements (which are true).  See the corrigenda to \cite{CoFA} for further details (available in the most recent arXiv version of \cite{CoFA}).

\subsection{Generic incidence structures}\label{sec:Tmn}

    Let $\cL=\{P,L,I\}$ where $P$ and $L$ are unary relations (for ``points" and ``lines") and $I$ is a binary relation (for ``incidences"). An \emph{incidence structure} is an $\cL$-structure in which $P$ and $L$ partition the universe and $I\seq P\times L$. In \cite{CoKr}, we study the complete theory $T_{m,n}$ of existentially closed $K_{m,n}$-free incidence structures, where $m,n\geq 1$.   The special case $T_{2,2}$ can also be viewed as the theory of existentially closed projective planes. In the paper, we define a certain ternary relation $\ter{I}$ and use it to show that $T_{m,n}$ is $\NSOP_1$ and that, over models, $\ter{I}$ coincides with Kim-independence (see \cite[Theorem 4.11]{CoKr}). We later claim in  Corollaries 4.14 and 4.24 of \cite{CoKr} that $\trt{f}$ and $\trt{d}$ are the same, and coincide with ``forcing base monotonicity" on $\ter{I}$ (in the sense of  \cite{Adgeo}). However, our arguments tacitly use \cite[Remark 5.4(3)]{Adgeo} in a few places and, as we will show below, it is in fact not true that $\trt{d}$ has algebraic extension in $T_{m,n}$ for all $m,n\geq 1$. 

    Before getting deeper into these issues, we first note that when $m=n=2$ (which is  the main case of interest), our proofs are valid because algebraic closure coincides with definable closure in this case (see \cite[Proposition 2.14]{CoKr} and the closing remarks of \cite[Section 2]{CoKr}). Our proofs are also valid  when $\min\{m,n\}=1$ because in this case $T_{m,n}$ is stable (see \cite[Section 4.5]{CoKr}).
    
    For general $m,n$, we will show  that the above claims  about $\trt{d}$ are not always true. However, it is easy to see that our proofs work with $\trt{d}$ replaced by $\trt{da}$. Thus we have that for $A,B,C\subset\M\models T_{m,n}$,
    \[
    \textstyle A\trt{f}_C B\miff A\trt{d}_C\acl(BC)\miff A\ter{I}_D B\textnormal{ for all $C\seq D\seq \acl(BC)$}.
    \] 
    Part of the proof uses \cite[Lemma 4.23]{CoKr},  which gives a mixed transitivity statement for $\trt{d}$ using a stationary independence relation (similar to Lemma \ref{lem:FAT} above). The proof tacitly assumes preservation of algebraic closure for $\trt{d}$, and so here   one  needs to also replace $\trt{d}$ with $\trt{da}$ to obtain a correct statement. 

    We also note that \cite[Proposition 4.22]{CoKr} includes the claim that $\trt{d}$ implies $\ter{I}$. While this is presented as an immediate consequence of \cite[Corollary 4.14]{CoKr} (which is false as stated), one can instead obtain this claim directly from Lemma 4.8 and Remark 4.12 of \cite{CoKr}, which are unaffected by the issues with \cite[Remark 5.4]{Adgeo}.

    Finally, we describe a counterexample to \cite[Corollary 4.14]{CoKr}, which also gives an instance where $\trt{d}$ and $\trt{f}$ disagree. We will assume some familiarity with general setup of \cite{CoKr}. Work in $T_{4,2}$, so any four points are incident a unique line, and any two lines have exactly three  points incident to them. Consider the incidence structure $(P,L;I)$ where $P=\{a_0,a_1,a_2,d_0,d_1,d_2\}$, $L=\{b_0,b_1,e\}$, and $I$ consists of $(d_0,e)$, $(d_1,e)$, $(d_i,b_j)$ for all $i$ and $j$, and $(a_i,e)$ for all $i$. Then $(P,L;I)$ is $K_{4,2}$-free, so we can freely complete it and embed it in the monster model of $T_{4,2}$. Note that $\{d_0, d_1, d_2\}$ is the unique set of three points incident to the lines $b_0$ and $b_1$, hence $d_0,d_1,d_2\in\acl(b_0,b_1)$. One can then show that $a_0a_1a_2\nter{I}_{d_0}b_0b_1$ and $a_0a_1a_2\trt{d}_\emptyset b_0b_1$, which altogether refutes \cite[Corollary 4.14]{CoKr}. Moreover, since $\trt{f}$  implies $\ter{I}$ (and  satisfies base monotonicity and  extension), we also have $a_0a_1a_2\ntrt{f}_\emptyset b_0b_1$ and thus a counterexample to \cite[Corollary 4.24]{CoKr}. We leave the verification of these details to the reader. When checking $a_0a_1a_2\trt{d}_\emptyset b_0b_1$, the key point is that every indiscernible sequence of $3$-tuples is either constant in at least two coordinates or non-constant in at least two coordinates. This explains  the need for $m=4$  in order to work with $3$-tuples. 

    Recall that $T_{4,2}$ is $\NSOP_1$ and thus, in addition to the theory $T^\eq_f$ from Section \ref{sec:GBF}, this gives another counterexample to the question from \cite{dElbACFG} discussed in the introduction. Note also that one can adjust the above configuration to obtain $\trt{f}_M\neq\trt{d}_M$ for any small model $M$ by starting with a copy of $(P,L;I)$ with no  incidences to $M$ (and taking the free completion with $M$). 

  For $m\geq 4$ and $n\geq 2$, a similar configuration can be constructed in $T_{m,n}$ using the interpretation of $T_{4,2}$ in $T_{m,n}$ described in \cite[Lemma 3.2]{CoKr}. This leaves open the case of $T_{3,2}$, which we will not pursue  here.

\subsection{Algebraically closed fields with a generic additive subgroup}\label{sec:ACFG}
For a fixed  $p>0$, let ACFG denote the model companion of the theory of algebraically closed fields of characteristic $p$ with a predicate $G$ for an additive subgroup. It turns out that the study of this theory is affected by \cite[Remark 5.4]{Adgeo} in ways very similar to the theories $T_{m,n}$ (discussed in the previous subsection). In particular, d'Elb\'{e}e  first  showed in \cite{dElbGER} that (any completion of) ACFG is properly $\NSOP_1$, and then in a sequel paper \cite{dElbACFG} it is asserted that $\trt{f}$ and $\trt{d}$ are the same and coincide with forcing base monotonicity on Kim independence. However, some arguments in \cite{dElbACFG} rely on \cite[Remark 5.4]{Adgeo}. Following discussions based on an early version of this paper, d'Elb\'{e}e has constructed an example showing that, like in $T^\eq_f$ and $T_{4,2}$, $\trt{f}$ and $\trt{d}$ need not coincide in ACFG, even over models.  On the other hand, the arguments from \cite{dElbACFG}  work with $\trt{d}$ replaced by $\trt{da}$. So it is still true that forking and dividing are the same for complete types over algebraically closed sets, and that $\trt{f}$ is the base monotonization of Kim independence. See \cite{dElb-weird} for details.


\begin{thebibliography}{1}
\bibitem{Adthesis}
H. Adler, \emph{Explanation of independence}, Ph.D. thesis,
  Albert-Ludwigs-Universit{\"a}t Freiburg, arXiv:math/0511616, 2005.

\bibitem{Adpreg}
\bysame, \emph{Around pregeometric theories},
  unpublished notes, 2007.

\bibitem{Adgeo}
\bysame, \emph{A geometric introduction to forking and thorn-forking}, J.
  Math. Log. \textbf{9} (2009), no.~1, 1--20. \MR{2665779 (2011i:03026)}


  \bibitem{Casanovas}
E. Casanovas, \emph{Simple theories and hyperimaginaries}, Lecture Notes in
  Logic, vol.~39, Association for Symbolic Logic, Chicago, IL, 2011.
  \MR{2814891}



\bibitem{CSS}
G. Cherlin, S. Shelah, and N. Shi, \emph{Universal graphs with forbidden
  subgraphs and algebraic closure}, Adv. in Appl. Math. \textbf{22} (1999),
  no.~4, 454--491. \MR{1683298 (2001a:05130)}

\bibitem{ChKa}
A. Chernikov and I. Kaplan, \emph{Forking and dividing in {${\rm NTP}\sb 2$}
  theories}, J. Symbolic Logic \textbf{77} (2012), no.~1, 1--20. \MR{2951626}

\bibitem{CoFA}
G. Conant, \emph{An axiomatic approach to free amalgamation}, J. Symbolic Logic
  \textbf{82} (2017), no.~2, 648--671.

\bibitem{Co13}
\bysame, \emph{Forking and dividing in {H}enson graphs}, Notre Dame J. Form.
  Log. \textbf{58} (2017), no.~4, 555--566.

\bibitem{CoHa}
G. Conant and J. Hanson, \emph{Separation for isometric group actions and
  hyperimaginary independence}, Fund. Math. \textbf{259} (2022), no.~1,
  97--109. \MR{4456344}

\bibitem{CoKr}
G. Conant and A. Kruckman, \emph{Independence in generic incidence structures},
  J. Symb. Log. \textbf{84} (2019), no.~2, 750--780. \MR{3961621}

\bibitem{dElbGER}
C. d'Elb\'{e}e, \emph{Generic expansions by a reduct}, J. Math. Log.
  \textbf{21} (2021), no.~3, Paper No. 2150016, 44. \MR{4330524}

\bibitem{dElbACFG}
\bysame, \emph{Forking, imaginaries, and other features of {$\text
  {ACFG}$}}, J. Symb. Log. \textbf{86} (2021), no.~2, 669--700. \MR{4328023}

 \bibitem{dElbee}
\bysame, \emph{Axiomatic Theory of Independence Relations in Model Theory}, arXiv:2308.07064.


 \bibitem{dElb-weird}
\bysame, \emph{Note on a bomb dropped by Mr Conant and Mr Kruckman, and its consequences for the theory ACFG}, arXiv:2311.02208.
 

\bibitem{DKR}
J. Dobrowolski, B. Kim, and N. Ramsey, \emph{Independence over arbitrary sets
  in {$\rm NSOP_1$} theories}, Ann. Pure Appl. Logic \textbf{173} (2022),
  no.~2, Paper No. 103058, 20. \MR{4335120}

\bibitem{Hobook}
W. Hodges, \emph{Model theory}, Encyclopedia of Mathematics and its
  Applications, vol.~42, Cambridge University Press, Cambridge, 1993.
  \MR{1221741 (94e:03002)}



 \bibitem{KaRam}
 I. Kaplan and N. Ramsey, \emph{On {K}im-independence}, J. Eur. Math. Soc.
   (JEMS) \textbf{22} (2020), no.~5, 1423--1474. \MR{4081726}

\bibitem{KrMSE1}
A.~Kruckman, \emph{How to
  show that an algebraic formula divides?}, Mathematics Stack Exchange,
  \url{https://math.stackexchange.com/q/3349094} (version: 2019-09-10).

\bibitem{KrMSE2}
\bysame, \emph{Independence and algebraic closure for stable theories.},
  Mathematics Stack Exchange, \url{https://math.stackexchange.com/q/4120234}
  (version: 2021-08-02).

\bibitem{KrRa}
A. Kruckman and N. Ramsey, \emph{Generic expansion and {S}kolemization in
  {${\rm NSOP}_1$} theories}, Ann. Pure Appl. Logic \textbf{169} (2018), no.~8,
  755--774. \MR{3802224}

\bibitem{Mutch2}
S. Mutchnik, \emph{Conant-independence and generalized free amalgamation},
  arXiv:2210.07527, 2022.

\bibitem{PaSOP4}
R. Patel, \emph{A family of countably universal graphs without {SOP}{$_4$}},
  preprint, 2006.

\bibitem{Sh500}
S. Shelah, \emph{Toward classifying unstable theories}, Ann. Pure Appl. Logic
  \textbf{80} (1996), no.~3, 229--255. \MR{1402297 (97e:03052)}

  \bibitem{TZ}
K. Tent and M. Ziegler, \emph{A course in model theory}, Lecture Notes in
  Logic, vol.~40, Association for Symbolic Logic, La Jolla, CA, 2012.
  \MR{2908005}

\end{thebibliography}
\end{document}